\newtheorem{thm}[subsection]{Theorem}
\newtheorem{defn}[subsection]{Definition}
\newtheorem{lemma}[subsection]{Lemma}
\newtheorem{cor}[subsection]{Corollary}
\newtheorem{prop}[subsection]{Proposition}
\theoremstyle{definition}
\newtheorem{rmk}[subsection]{Remark}
\newtheorem{eg}[subsection]{Example}
\newtheorem{qn}[subsection]{Question}
\numberwithin{equation}{section}
\begin{document}

\title[ minimum  attaining Closed Operators ]{Absolutely minimum attaining Closed Operators  }

\author[S. H. Kulkarni,  G. Ramesh]{ S. H. Kulkarni$^1$ and  G. Ramesh $^2*$}

\address{$^{1}$ Department of Mathematics\\ I. I. T. Madras, Chennai\\Tamilnadu, India 600 036.}
\email{shk@iitm.ac.in}

\address{$^{2}$Department of Mathematics\\I. I. T. Hyderabad, Kandi(V)\\ Sangareddy, Telangana \\ India-502 285.}

\email{rameshg@iith.ac.in}

\subjclass[2000]{ 47A75, 47A05, 47A10, 47A15 }
\keywords{closed operator, minimum modulus, absolutely minimum attaining operator, invariant subspace, Lomonosov theorem, generalized inverse.}
\date{\today .
\newline \indent $^{*}$Corresponding author}
\begin{abstract}
We define and discuss properties of the class  of unbounded operators which  attain minimum modulus.
We establish a relationship between this class and the class of norm attaining bounded operators and compare the properties of both.
Also we  define absolutely minimum attaining operators (possibly unbounded) and characterize injective absolutely minimum attaining operators  as those with compact generalized inverse.
We give several consequences, one of those is that every such operator has a non trivial hyper invariant subspace.
\end{abstract}
\maketitle

\section{Introduction }
The class of norm attaining operators on Banach spaces is well studied by several authors in the literature. It is known that the class of norm attaining operators  is dense in the space of all bounded linear operators on a Hilbert space with respect to the operator norm \cite[Theorem 1]{enfloetal}.
For more details on norm attaining operators on Banach spaces, we refer to \cite{shkarin,acostaetal} and \cite{falcoetal} and references therein.

Every compact operator is norm attaining. In fact, restricted to any non zero closed subspace of a Hilbert space, it remains as compact and hence norm attaining. Motivated by this observation, Carvajal and Neves \cite{carvajalneves1} introduced a class of operators, called the absolutely norm attaining operators. Characterization of  such  operators on separable Hilbert space, in a particular case is given in \cite{rameshstructurethm} and
a complete characterization on arbitrary  Hilbert space is discussed in \cite{SP}. Many properties of these operators resemble the properties of compact operators.

It is a natural question to ask what happens if the norm is replaced by the minimum modulus. This leads to the definition of minimum attaining operators. Analogously, we can define absolutely minimum attaining operators. In a recent paper Carvajal and Neves \cite{carvajalneves2}, studied bounded operators between two different Hilbert spaces having such property.
 The structure of positive absolutely minimum attaining operators is  described in \cite{grs2}. This concept is also applicable to linear operators, that are not bounded.

In this article, we introduce the minimum attaining property for densely defined closed operators (possibly not bounded). We prove several characterizations  of such  operators.  We also prove the dual
relation between the norm attaining bounded operators and the minimum attaining closed operators. Finally, we introduce the absolutely minimum attaining
operators and prove a representation theorem for the injective absolutely minimum attaining operators. Furthermore, we observe that this class is exactly the  same as the class of densely defined closed operators whose generalized inverse is compact. Finally, we show that these operators possess a non trivial hyperinvariant subspace.

We organize the article as follows: In the second section we provide basic results which will be used  throughout the article. In the third section, we define minimum attaining property for densely defined closed operators and prove several characterizations. Some of the results in this section generalize the existing results of bounded operators and some of them are new. In the fourth section, we define absolutely minimum attaining operators and show that all such operators have a closed range. In particular, we show that an injective densely defined closed operator is  absolutely minimum  attaining if and only if its Moore-Penrose inverse is compact. Using this result, we deduce several consequences. One of the
important consequences is that every such operator has a non trivial hyper invariant subspace.

\section{Preliminaries}
In this section we introduce some basic notations, definitions and results that are  needed to prove our main results.

Throughout the article we consider infinite dimensional complex Hilbert spaces which will be denoted by $H, H_1,H_2$ etc. The inner product and the
induced norm are denoted  by  $\langle \cdot\rangle$ and
$||.||$, respectively. Let $T$ be a linear operator with domain $D(T)$, a subspace of $H_1$ and taking values in $H_2$. If $D(T)$ is dense in $H_1$, then $T$ is called a densely defined operator. The  graph $G(T)$ of $T$ is defined by  $G(T):={\{(Tx,x):x\in D(T)}\}\subseteq H_1\times H_2$. If $G(T)$ is
closed, then $T$ is called a closed operator.  Equivalently, $T$ is closed if and only if  $(x_n)$ is a sequence in  $D(T)$  such that  $x_n\rightarrow x\in H_1$ and $Tx_n\rightarrow y\in H_2$, then $x\in D(T)$ and $Tx=y$.

By the closed graph Theorem \cite{rud}, an everywhere defined
closed operator is bounded.  Hence the domain of an unbounded closed operator is a proper subspace of a Hilbert space.

The space of all bounded operators between $H_1$ and $H_2$ is
denoted by $\mathcal B(H_1,H_2)$ and the class of all closed operators between $H_1$ and $H_2$ is denoted by $\mathcal C(H_1,H_2)$. We write $\mathcal B(H,H)=\mathcal B(H)$  and $\mathcal C(H,H)=\mathcal C(H)$.

If $T\in \mathcal C(H_1,H_2)$, then  the null space and the range space of $T$ are denoted by $N(T)$ and $R(T)$ respectively and the space $C(T):=D(T)\cap N(T)^\bot$ is called the carrier of $T$.
In fact, $D(T)=N(T)\oplus^\bot C(T)$ \cite[page 340]{ben}.

For a densely defined operator, there exists a unique linear operator (in fact, a closed operator) $T^*:D(T^*)\rightarrow H_1$, with
\begin{equation*}
D(T^*):={\{y\in H_2: x\rightarrow \langle Tx,y\rangle \, \text{for all}\, x\in D(T)\,\text{is continuous}}\}\subseteq H_2
\end{equation*}
 satisfying $\langle Tx,y\rangle =\langle x,T^*y\rangle$ for all $x\in D(T)$ and $y\in D(T^*)$. it is to be noted that $T^*$ exists if and only if $T$ is densely defined.

If $S$ and $T$ are closed operators  with the property that $D(T)\subseteq D(S)$ and $Tx=Sx$ for all $x\in D(T)$, then $S$ is called the restriction of $T$ and $T$ is called an extension of $S$.  Furthermore, $S=T$ if and only if  $S\subseteq T$ and $T\subseteq S$.

If $S\in \mathcal B(H)$ and $T\in \mathcal C(H)$ is densely defined, the we say $S$ and $T$ are commuting if $ST\subseteq TS$. That is, $D(ST)\subseteq D(TS)$ and $STx=TSx$ for all $x\in D(ST)$.

A densely defined operator $T\in \mathcal C(H)$ is said to be normal if $T^*T=TT^*$, self-adjoint if $T=T^*$ and positive if $\langle Tx,x\rangle \geq 0$ for all $x\in D(T)$.

If $T$ is positive, then there exists a unique positive operator $S$ such that $T=S^2$. The operator $S$ is called the square root of $T$ and it is denoted by $S=T^\frac{1}{2}$.

 If $T\in \mathcal C(H_1,H_2)$ is densely defined, then the  operator  $|T|:=(T^*T)^\frac{1}{2}$ is called the modulus of $T$. There exists a unique partial isometry $V:H_1\rightarrow H_2$ with initial space $\overline{R(T^*)}$ and range $\overline{R(T)}$ such that $T=V|T|$.

 It can be verified that $D(|T|)=D(T)$ and $N(|T|)=N(T)$ and $\overline{R(|T|)}=\overline{R(T^*)}$.

 Let $T\in \mathcal C(H)$ be densely defined. The resolvent of $T$ is defined by
 \begin{equation*}
 \rho(T):={\{\lambda \in \mathbb C: T-\lambda I:D(T)\rightarrow H\; \text{is invertible and}\; (T-\lambda I)^{-1}\in \mathcal B(H)}\}
 \end{equation*}
and
\begin{align*}
\sigma(T):&=\mathbb C \setminus \rho(T)\\
\sigma_p(T):&={\{\lambda \in \mathbb C: T-\lambda I:D(T)\rightarrow H \; \text{is not one-to-one}}\},
\end{align*}
are called the  spectrum and the point spectrum of $T$, respectively.


Let $T\in \mathcal C(H_1,H_2)$ be densely defined. A subspace $D$ of $D(T)$ is called a core for $T$ if for any $x\in D(T)$, there exists a sequence $(x_n)\subset D$ such that $\displaystyle \lim_{n\rightarrow \infty}x_n=x$ and
$\displaystyle \lim_{n\rightarrow \infty}Tx_n=Tx$. In other words, $D$ is dense in the graph norm, which is defined by $\||x\||:=\|x\|+\|Tx\|$ for all $x\in D(T)$. It is a well known fact that $D(T^*T)$ is a core for $T$ (see \cite[Proposition 3.18, page 47]{schmudgen} for details).

If $M$ is a closed subspace of a Hilbert space $H$, then $P_M$ denotes the orthogonal projection $P_M:H\rightarrow H$ with range $M$,  and $S_M:={\{x\in M:\|x\|=1}\}$ is the unit sphere of $M$.

We refer \cite{akhglazman,taylorlay,goldberg,rud,birmannsolomyak,schmudgen} for the above basics of unbounded operators.

Here we recall definition and properties of the Moore-Penrose inverse (or generalized inverse) of a densely defined closed operator that we need for our purpose. We refer \cite{ben} for more details on this topic.

Let $T\in \mathcal C(H_1,H_2)$ be densely defined. Then there exists
a unique densely defined operator $T^\dagger \in \mathcal
C(H_2,H_1)$ with domain $D(T^\dagger)=R(T)\oplus ^\bot R(T)^\bot$
and has the following properties:
\begin{enumerate}
\item $TT^\dagger y=P_{\overline{R(T)}}~y, ~\text{for all}~y\in D(T^\dagger)$

\item $T^\dagger Tx=P_{N(T)^\bot} ~x, ~\text{for all}~x\in D(T)$

\item $N(T^\dagger)=R(T)^\bot$.
\end{enumerate}
This unique operator $T^\dagger$ is called the \textit{Moore-Penrose inverse} of $T$.\\
The following property of $T^\dagger$ is also well known.

For every $y\in D(T^\dagger)$, let $$L(y):=\Big\{x\in D(T):
||Tx-y||\leq ||Tu-y||\quad \text{for all} \quad u\in D(T)\Big\}.$$
 Here any $u\in L(y)$ is called a \textit{least square solution} of the operator equation $Tx=y$. The vector  $x=T^\dagger y\in L(y),\,||T^\dagger y||\leq ||x||\quad \text{for all} \quad x\in L(y)$
 and it is called the  \textit{least square solution of minimal norm}.
 A different treatment of $T^\dagger$ is given in \cite[Pages 314, 318-320]{ben},
 where it is called ``\textit{the Maximal Tseng generalized Inverse}".

Here we recall some properties of $T^{\dagger}$ that we will be using very frequently.
\begin{thm}\cite[Page 320]{ben}\label{propertiesmpi}
Let $T\in \mathcal C(H_1,H_2)$ be densely defined. Then
\begin{enumerate}
\item $D(T^\dagger)=R(T)\oplus^\bot R(T)^\bot,\;
N(T^\dagger)=R(T)^\bot=N(T^*)$
\item $R(T^\dagger)=C(T)$
\item $T^\dagger$ is densely defined and $ T^\dagger \in \mathcal C(H_2,H_1)$
\item $T^\dagger$ is continuous if and only $R(T)$ is closed
\item $T^{\dagger \dagger}=T$
\item $T^{* \dagger}=T^{\dagger *}$
\item $N(T^{* \dagger})=N(T)$
\item $T^*T$ and $T^\dagger T^{* \dagger}$ are positive and $(T^*T)^\dagger =T^\dagger T^{*
\dagger}$
\item $TT^*$ and $ T^{* \dagger}T^\dagger$ are positive and $(TT^*)^\dagger= T^{*
\dagger}T^\dagger$.
\end{enumerate}
\end{thm}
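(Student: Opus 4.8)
The plan is to reduce all nine assertions to one structural fact: $T$ restricts to a closed bijection on its carrier. Put $T_0:=T|_{C(T)}$, viewed as a map from $C(T)=D(T)\cap N(T)^\bot$ onto $R(T)$. The decomposition $D(T)=N(T)\oplus^\bot C(T)$ shows that $T_0$ is injective with $R(T_0)=R(T)$, and $T_0$ inherits closedness from $T$, since its graph is the intersection of $G(T)$ with a closed subspace. Hence $T_0^{-1}\colon R(T)\to C(T)$ is a well-defined closed injective operator, and the recipe in Definition \ref{geninv} amounts to $T^\dagger y=T_0^{-1}\big(P_{\overline{R(T)}}\,y\big)$ for $y\in D(T^\dagger)=R(T)\oplus^\bot R(T)^\bot$; here $P_{\overline{R(T)}}y\in R(T)$ for such $y$, so the formula is legitimate. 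From this representation, (1) and the equality $R(T^\dagger)=C(T)$ of (2) are immediate, once I record the standard duality $R(T)^\bot=N(T^*)$, which holds because $y\perp R(T)$ is equivalent to the map $x\mapsto\langle Tx,y\rangle$ being identically zero, i.e. to $y\in D(T^*)$ with $T^*y=0$.

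For (3), density of $D(T^\dagger)$ follows from $\overline{R(T)\oplus^\bot R(T)^\bot}=\overline{R(T)}\oplus^\bot R(T)^\bot=H_2$, and closedness of $T^\dagger$ from closedness of $T_0^{-1}$ together with the orthogonality of the two summands of the domain. For (4), if $R(T)$ is closed then $T_0^{-1}$ is a closed operator defined on all of the Hilbert space $R(T)$, hence bounded by the closed graph theorem, and $D(T^\dagger)=H_2$; conversely a bounded closed operator has closed domain, forcing $R(T)\oplus^\bot R(T)^\bot$, and hence $R(T)$, to be closed. Assertion (5) is proved by symmetry: applying the construction to $T^\dagger$ and using $R(T^\dagger)=C(T)$ together with $C(T)^\bot=N(T)$ (which is where density of $D(T)$ enters, via $\overline{C(T)}=N(T)^\bot$) identifies $D(T^{\dagger\dagger})=C(T)\oplus^\bot N(T)=D(T)$, and the two operators then agree on each summand.

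The relations (6) and (7) are handled through the adjoint. Applying (1) to $T^*$ gives $N(T^{*\dagger})=R(T^*)^\bot=N(T^{**})=N(T)$, where the last equality uses $T^{**}=T$ for densely defined closed $T$; this is (7). For (6), $T^{*\dagger}=T^{\dagger *}$, I would verify that $T^{\dagger *}$ satisfies the three defining identities of Definition \ref{geninv} relative to $T^*$ — computing its domain and the projections $P_{\overline{R(T^*)}}$ and $P_{N(T^*)^\bot}$ through the decompositions of $H_1$ and $H_2$ — and then invoke uniqueness of the Moore--Penrose inverse.

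The main obstacle is proving (8) and (9). First I would record that $T^*T$ and $TT^*$ are densely defined, self-adjoint and positive (von Neumann's theorem), so that their Moore--Penrose inverses exist. The product identities $(T^*T)^\dagger=T^\dagger T^{*\dagger}$ and $(TT^*)^\dagger=T^{*\dagger}T^\dagger$ cannot be obtained by formal manipulation, since generalized inverses of unbounded operators behave badly under composition and one must first show that the domains of the two sides coincide. The strategy is once more to verify the defining relations of Definition \ref{geninv}: after using (6) to write $T^\dagger T^{*\dagger}=T^\dagger T^{\dagger *}$, I would check that this operator inverts $T^*T$ on the carrier $C(T^*T)$, reduces to the correct projection on $D(T^*T)$, and has null space $R(T^*T)^\bot$, the delicate point throughout being to track which vectors lie in the nested domains $D(T)$, $D(T^*)$, $D(T^*T)$ and $D(T^\dagger)$. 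It is this domain bookkeeping, rather than any single inequality or identity, on which the argument will hinge.
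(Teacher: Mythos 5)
First, a point of comparison that matters here: the paper does not prove this theorem at all --- it is quoted as a preliminary from \cite[Page 320]{ben} --- so there is no internal proof to measure yours against, and what follows judges the proposal on its own terms. Your basic construction is the standard one (essentially the Maximal Tseng inverse construction of the cited reference): $T_0:=T|_{C(T)}$ is a closed bijection of $C(T)$ onto $R(T)$, and $T^\dagger y=T_0^{-1}P_{\overline{R(T)}}\,y$ on $R(T)\oplus^\bot R(T)^\bot$. Your arguments for (1)--(5) and (7) are correct. Two details should be made explicit: the closed subspace whose intersection with $G(T)$ gives $G(T_0)$ is $H_2\times N(T)^\bot$ (it cannot be $H_2\times C(T)$, since $C(T)$ need not be closed), and the identity $C(T)^\bot=N(T)$ used in (5) does require density of $D(T)$, which you correctly record.

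The genuine gap is in (6), (8) and (9): there you state a strategy --- verify the defining identities of Definition \ref{geninv} and invoke uniqueness --- but never carry it out, and you yourself flag the domain bookkeeping as the crux. These are precisely the nontrivial items of the theorem. For (6), uniqueness only applies after you have shown $D(T^{\dagger *})=R(T^*)\oplus^\bot R(T^*)^\bot$; computing the domain of the adjoint of the (generally unbounded) operator $T^\dagger$ is exactly the hard step, and nothing in your setup yields it. For (8), the positivity claims do follow from von Neumann's theorem once (6) is known, since $T^\dagger T^{*\dagger}=T^\dagger (T^\dagger)^*$; but the identity $(T^*T)^\dagger=T^\dagger T^{*\dagger}$ requires proving that $D(T^\dagger T^{*\dagger})=\{y\in D(T^{*\dagger}): T^{*\dagger}y\in D(T^\dagger)\}$ coincides with $R(T^*T)\oplus^\bot R(T^*T)^\bot$, a set-theoretic identity between ranges and carriers of unbounded operators (one must relate $R(T^*T)$ to $R(T^*)$ and $C(T^*T)$ to $C(T)$) that you never establish; the same applies to (9). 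As it stands, the proposal is a complete proof of (1)--(5) and (7) and an unexecuted plan for (6), (8) and (9).
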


\section{Minimum attaining Operators}

In this section first we discuss some important properties of minimum attaining operators. These operators for the bounded case was discussed in \cite{carvajalneves2} and  the unbounded case in \cite{shkgrminattaining}.
It is proved that this class is dense in the class of densely defined closed operators with respect to the gap topology.
\begin{defn}\cite{ben, goldberg}\label{minmmodulus}
 Let $T\in \mathcal C(H_1,H_2)$ be densely defined. Then
 \begin{align*}
 m(T)&:=\inf{\{\|Tx\|: x\in S_{D(T)}}\}\\
 \gamma(T)&:=\inf{\{\|Tx\|:x\in S_{C(T)}}\},
 \end{align*}
 are  called the minimum modulus and the reduced minimum modulus of $T$, respectively. The operator $T$ is said to be bounded  below if and only if $m(T)>0$.
\end{defn}

\begin{rmk}
If $T\in \mathcal C(H_1,H_2)$ is densely defined, then
\begin{itemize}
\item[(a)] By definition, we have $m(T)\leq \gamma(T)$. More over, if $T$ is one-to-one, $m(T)=\gamma(T)$ since $D(T)=C(T)$
\item[(b)] $m(T)>0$ if and only if $R(T)$ is closed and $T$ is one-to-one
\item[(c)] Since $D(T)=D(|T|)$ and $\|Tx\|=\||T|x\|$ for all $x\in D(T)$, we can conclude that $m(T)=m(|T|)$ and $\gamma(T)=\gamma(|T|)$.
    \end{itemize}
\end{rmk}

\begin{rmk}\label{reciprocalinversenorm}
 If $T\in \mathcal C(H)$ is densely defined and $R(T)$ is closed, then $\gamma(T)=\frac{1}{\|T^{\dagger}\|}$.

\end{rmk}

 Recall that $T\in\mathcal B(H_1,H_2)$ is said to be norm attaining if there exists  $x_0\in S_{H_1}$ such that
$\|Tx_0\|=\|T\|$. We denote the class of all norm attaining operators between $H_1$ and $H_2$ by $\mathcal N(H_1,H_2)$. In case $H_1=H_2=H$, we denote this by $\mathcal N(H)$. In a similar way, we can define operators that attain  minimum modulus. The class of bounded operators that attain  minimum modulus is defined and several characterizations are proved  in \cite{carvajalneves1}. Here we discuss the same for unbounded operators.
\begin{defn}
Let $T\in \mathcal C(H_1,H_2)$ be densely defined. If there exists  $x_0\in S_{D(T)}$ such that $\|Tx_0\|=m(T)$, then we call $T$ to be minimum attaining.
\end{defn}

We write $$\mathcal{M}_{c}(H_1,H_2)={\{T\in \mathcal C(H_1,H_2): T \; \text{is densely defined and minimum attaining}}\}$$ and $\mathcal M_{c}(H,H)=\mathcal M_{c}(H)$.

\begin{rmk}
\begin{enumerate}
 Let $T\in \mathcal{C}(H_1,H_2)$ be densely defined.
\item  If $T$ is not one-to-one, then $m(T)=0$ and there exists a $x_0\in S_{N(T)}$ such that $Tx_0=0$.  Hence $T\in \mathcal M_{c}(H_1,H_2)$.
\item If $T$ is  one-to-one and $R(T)$ is not closed, then $m(T)=0$. But there does not exists $x_0\in D(T)$ such that $\|Tx_0\|=0$, since $T$ is one-to-one. Thus $T\notin \mathcal M_{c}(H_1,H_2)$.
\end{enumerate}
From the above two observations it is apparent that the injectivity of the operator plays an important role in the minimum attaining property.
\end{rmk}


First, we establish some results related to the minimum modulus of a densely defined closed operator, which are useful in discussing the minimum attaining property.
\begin{prop}\label{fnlcalulusformods}
Let $T\in \mathcal C(H)$ be densely defined and normal. Then
\begin{enumerate}
\item \label{mmspectralformula}  $m(T)=d(0,\sigma(T))$
\item \label{polymod}$m(T^n)=m(T)^n$.
\end{enumerate}
\end{prop}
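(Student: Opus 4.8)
The plan is to invoke the spectral theorem for densely defined normal operators, which furnishes a spectral measure $E$ supported on $\sigma(T)$ with $T=\int_{\sigma(T)}\lambda\,dE(\lambda)$ and, for every $x\in D(T)$, the identity $\|Tx\|^2=\int_{\sigma(T)}|\lambda|^2\,d\mu_x(\lambda)$, where $\mu_x(\cdot):=\langle E(\cdot)x,x\rangle$ is a positive measure of total mass $\|x\|^2$. For normal $T$ the spectrum $\sigma(T)$ is a nonempty closed subset of $\mathbb C$, and since $|\lambda|\to\infty$ the distance $d:=d(0,\sigma(T))$ is attained at some $\lambda_0\in\sigma(T)$ with $|\lambda_0|=d$.

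For part (\ref{mmspectralformula}) I would prove two inequalities. For $m(T)\geq d$: if $x\in S_{D(T)}$ then $|\lambda|\geq d$ on $\sigma(T)$, so $\|Tx\|^2=\int|\lambda|^2\,d\mu_x\geq d^2\mu_x(\sigma(T))=d^2$, and taking the infimum gives $m(T)\geq d$. For the reverse inequality, fix $\epsilon>0$; the spectral projection $E(B(\lambda_0,\epsilon))$ onto the open ball $B(\lambda_0,\epsilon)$ is nonzero because $\lambda_0\in\sigma(T)$, so choose a unit vector $x$ in its range. As $B(\lambda_0,\epsilon)$ is bounded, $x\in D(T)$, and $\mu_x$ is supported where $|\lambda|\leq d+\epsilon$, whence $\|Tx\|^2\leq(d+\epsilon)^2$. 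Thus $m(T)\leq d+\epsilon$, and letting $\epsilon\downarrow 0$ yields $m(T)\leq d$, proving (\ref{mmspectralformula}).

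For part (\ref{polymod}) the most economical route is to apply (\ref{mmspectralformula}) to $T^n$. Since $T$ is normal, $T^n$ is again densely defined and normal, realized through the functional calculus as $T^n=\int_{\sigma(T)}\lambda^n\,dE(\lambda)$, and the spectral mapping theorem gives $\sigma(T^n)=\{\lambda^n:\lambda\in\sigma(T)\}$. Therefore
\[
m(T^n)=d\bigl(0,\sigma(T^n)\bigr)=\inf\{|\lambda|^n:\lambda\in\sigma(T)\}=\bigl(\inf\{|\lambda|:\lambda\in\sigma(T)\}\bigr)^n=m(T)^n.
\]
Alternatively, and avoiding the spectral mapping theorem, one may argue directly from $\|T^n x\|^2=\int_{\sigma(T)}|\lambda|^{2n}\,d\mu_x(\lambda)$, repeating verbatim the two estimates of part (\ref{mmspectralformula}) with $|\lambda|^2$ replaced by $|\lambda|^{2n}$: the lower bound uses $|\lambda|^{2n}\geq d^{2n}$ and the upper bound uses unit vectors supported near $\lambda_0$, where $|\lambda|^{2n}\leq(d+\epsilon)^{2n}$.

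The main obstacle lies not in these estimates, which are routine once the spectral measure is in hand, but in the functional-analytic bookkeeping for the unbounded setting: one must justify that the iterated composition defining $T^n$ coincides with the functional-calculus operator $\int\lambda^n\,dE$ (in particular that their domains agree), and that $\sigma(T^n)=\{\lambda^n:\lambda\in\sigma(T)\}$ holds for unbounded normal $T$. A convenient way to simplify this is to reduce to the modulus: since $m(T)=m(|T|)$ and, by normality, $|T^n|=|T|^n$ (both equal $\int|\lambda|^n\,dE$ via the functional calculus), it suffices to prove $m(|T|^n)=m(|T|)^n$ for the positive self-adjoint operator $|T|$, whose spectrum lies in $[0,\infty)$; there the nearest point to $0$ is simply the bottom of the spectrum and only the function $s\mapsto s^n$ on $[0,\infty)$ is involved, so the spectral-mapping considerations become transparent.
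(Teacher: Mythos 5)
Your proof is correct, but it takes a genuinely different route from the paper's. The paper disposes of the proposition in three lines by a case split on $m(T)$: if $m(T)=0$, then $T$ has no bounded inverse, so $0\in\sigma(T)$ and both sides vanish; if $m(T)>0$, then (normality giving surjectivity from injectivity and closed range) $T^{-1}\in\mathcal B(H)$ and $m(T)=\gamma(T)$, after which the identification with $d(0,\sigma(T))$ --- and part (\ref{polymod}) as well --- is quoted from an external result (Theorem 4.3 of Ramesh's thesis). You instead argue directly from the spectral measure $E$ of $T$: the lower bound $\|Tx\|^2=\int|\lambda|^2\,d\mu_x\geq d^2$ because $|\lambda|\geq d$ on the support of $E$, and the upper bound from a unit vector in the range of $E(B(\lambda_0,\epsilon))$, which is nonzero precisely because $\sigma(T)$ is the support of $E$; part (\ref{polymod}) then follows either from the spectral mapping theorem applied to $T^n$ (the closure that may appear in $\sigma(f(T))=\overline{f(\sigma(T))}$ is harmless for the infimum) or by repeating the estimates with $|\lambda|^{2n}$. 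What your approach buys is self-containedness: both inequalities and the second part are proved rather than cited, and you correctly flag and defuse the only delicate points in the unbounded setting (that the iterated composition $T^n$ agrees with $\int\lambda^n\,dE$ domain and all, e.g.\ by reducing to the positive operator $|T|$). What the paper's approach buys is brevity: it reduces everything to the bounded-operator fact $m(T)=1/\|T^{-1}\|$ and a quoted theorem, at the cost of not being self-contained.
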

\begin{proof}
If $m(T)=0$, then $T$ is not invertible, so $0\in \sigma(T)$ and  $d(0,\sigma(T))=0$. If $m(T)>0$, then $T^{-1}\in \mathcal B(H)$. In this case, $m(T)=\gamma(T)=\frac{1}{\|T^{-1}\|}$.
Therefore,
\begin{align*}
\frac{1}{\|T^{-1}\|}&=\dfrac{1}{\sup{\{\mu:\mu\in \sigma(T^{-1})}\}}\\
                    &=\dfrac{1}{\sup{\{\frac{1}{\lambda}:\lambda \in \sigma(T)}\}}\\
                    &=\inf{\{\lambda:\lambda \in \sigma(T)}\}\\
                    &=d(0,\sigma(T)). \qedhere
                    \end{align*}

Proof of (\ref{polymod}):  It is easy to verify that $T^n$ is normal. Hence, by (\ref{mmspectralformula}) and the spectral mapping theorem we can conclude that
\begin{align*}
m(T^n)&=\inf{\{|\mu|: \mu \in \sigma(T^n)}\}\\
&=\inf{\{|\lambda^n|:\lambda \in \sigma(T)}\}\\
                                           &=\inf{\{|\lambda|^n:\lambda \in \sigma(T)}\}\\
                                           &=m(T)^n.
                                           \end{align*}
\end{proof}
\begin{cor}\label{squarerootminmod}
If $T\in \mathcal C(H_1,H_2)$ is densely defined, then
\begin{enumerate}
\item $m(T)=d(0,\sigma(|T|))$
\item $m(T^*T)=m(T)^2$.
\end{enumerate}
\end{cor}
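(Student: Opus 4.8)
The plan is to reduce both statements to the positive (hence normal) operator $|T| = (T^*T)^{1/2} \in \mathcal{C}(H_1)$ and then invoke Proposition \ref{fnlcalulusformods}, which applies precisely to densely defined normal operators on a single Hilbert space. The bridge between $T$ and $|T|$ is the identity $m(T) = m(|T|)$, recorded in the discussion following Theorem \ref{squareroot}: since $D(|T|) = D(T)$ and $\|Tx\| = \||T|x\|$ for every $x \in D(T)$, the two infima taken over $S_{D(T)} = S_{D(|T|)}$ coincide.

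For the first assertion I would observe that $|T|$ is positive, hence self-adjoint and normal, so part (\ref{mmspectralformula}) of Proposition \ref{fnlcalulusformods} yields $m(|T|) = d(0, \sigma(|T|))$. Combining this with $m(T) = m(|T|)$ gives $m(T) = d(0, \sigma(|T|))$.

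For the second assertion I would use the identity $|T|^2 = T^*T$, which is immediate from $|T| = (T^*T)^{1/2}$ together with the defining property $S = T^{1/2} \Rightarrow S^2 = T$ of the square root in Theorem \ref{squareroot}. Applying part (\ref{polymod}) of Proposition \ref{fnlcalulusformods} to the normal operator $|T|$ with $n = 2$ gives $m(|T|^2) = m(|T|)^2$; that is, $m(T^*T) = m(|T|)^2 = m(T)^2$, using $m(|T|) = m(T)$ once more.

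The only point requiring care is a matter of bookkeeping rather than a genuine obstacle: Proposition \ref{fnlcalulusformods} is stated for operators in $\mathcal{C}(H)$, whereas here $T$ maps $H_1$ into a possibly different space $H_2$, so neither $T$ nor $T^*T$ is covered by that proposition until one passes to a single-space reformulation. This causes no difficulty because all of the spectral input is routed through $|T|$, which is a genuine positive operator on the one space $H_1$; once the equality $m(T) = m(|T|)$ is established, no further analysis of the two-space setting is needed.
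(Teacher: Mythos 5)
Your proposal is correct and takes essentially the same approach as the paper: reduce both claims to the positive operator $|T|$ on $H_1$ via the identity $m(T)=m(|T|)$ and then apply Proposition \ref{fnlcalulusformods}. The only cosmetic difference is that for the second part you invoke $m(T^n)=m(T)^n$ directly with $n=2$, whereas the paper writes the same computation through the chain $m(T^*T)=m(|T|^2)=d(0,\sigma(|T|^2))=d(0,\sigma(|T|))^2$; both rest on the same two parts of that proposition.
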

\begin{proof}
  We have $m(T)=m(|T|)=d(0,\sigma(|T|))$, by  (\ref{mmspectralformula}) of Proposition \ref{fnlcalulusformods}. Also, $m(T^*T)=m(|T|^2)=d(0,\sigma(|T|^2))=d(0,\sigma(|T|))^2=m(|T|)^2=m(T)^2$. Here we have used both (\ref{mmspectralformula})  and (\ref{polymod}) of Proposition \ref{fnlcalulusformods} to get the conclusion.
  \end{proof}
  \begin{prop}\label{equalityofmodulus}
Let $T\in \mathcal C(H)$ be densely defined and positive. Then
\begin{equation*}
m(T)=\inf \big\{\langle Tx,x\rangle :x\in S_{D(T)}\big\}=m_T.
\end{equation*}
\end{prop}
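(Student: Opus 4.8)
The plan is to reduce everything to the square root $A := T^{1/2}$ of $T$, which exists and is positive by Theorem \ref{squareroot}; thus $T = A^2$ and $D(T) = D(A^2) \subseteq D(A)$. The key identity to establish first is that $\langle Tx, x\rangle = \|Ax\|^2$ for every $x \in D(T)$. Indeed, for $x \in D(A^2)$ we have $Ax \in D(A)$, so $Tx = A(Ax)$ is meaningful, and since $A$ is self-adjoint with $x \in D(A) = D(A^*)$ we get $\langle A(Ax), x\rangle = \langle Ax, Ax\rangle = \|Ax\|^2$. Consequently $m_T = \inf\{\|Ax\|^2 : x \in S_{D(T)}\}$.

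With this reformulation in hand, I would prove the two inequalities separately. For $m_T \le m(T)$, apply the Cauchy--Schwarz inequality: for $x \in S_{D(T)}$ one has $\langle Tx, x\rangle \le \|Tx\|\,\|x\| = \|Tx\|$, and taking the infimum over $S_{D(T)}$ gives $m_T \le m(T)$. For the reverse inequality, observe that $D(T) \subseteq D(A)$ implies $S_{D(T)} \subseteq S_{D(A)}$, whence $\inf\{\|Ax\| : x \in S_{D(T)}\} \ge \inf\{\|Ax\| : x \in S_{D(A)}\} = m(A)$; squaring (all quantities are nonnegative) yields $m_T \ge m(A)^2$. Finally, since $A$ is positive and therefore normal, Proposition \ref{fnlcalulusformods}(2) with $n = 2$ gives $m(A^2) = m(A)^2$, that is $m(T) = m(A)^2$. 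Hence $m_T \ge m(T)$, and combined with the previous step we conclude $m_T = m(T)$.

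The one point requiring care is the domain mismatch: the quadratic form $x \mapsto \langle Tx, x\rangle$ lives on $D(T)$, whereas $\|Ax\|$ is naturally defined on the larger domain $D(A)$, so a priori the infimum of $\|Ax\|$ over the smaller sphere $S_{D(T)}$ might strictly exceed $m(A)$. The trick is that one need not settle this directly: the Cauchy--Schwarz bound supplies the matching upper estimate, so the two one-sided inequalities pinch $m_T$ to exactly $m(A)^2 = m(T)$. Alternatively, since $D(T) = D(A^*A)$ is a core for $A$, the two infima of $\|Ax\|$ genuinely agree, but the sandwiching argument avoids invoking this. I expect the only genuinely delicate step to be the verification of $\langle Tx, x\rangle = \|Ax\|^2$ with all domain conditions properly justified.
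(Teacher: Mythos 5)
Your proof is correct, and it takes a genuinely different route from the paper's in one of the two directions. The inequality $m_T \geq m(T)$ is handled essentially identically in both: the identity $\langle Tx,x\rangle = \|T^{1/2}x\|^2$ on $D(T)$, the inclusion $S_{D(T)}\subseteq S_{D(T^{1/2})}$, and the spectral identity $m(T^{1/2})^2 = m(T)$ (the paper derives this from Corollary \ref{squarerootminmod} together with $\sigma(T)=\{\lambda^2:\lambda\in\sigma(T^{1/2})\}$, while you invoke Proposition \ref{fnlcalulusformods}(2) with $n=2$; these amount to the same fact). The divergence is in proving $m_T \leq m(T)$: the paper extends the pointwise bound $\|T^{1/2}x\|^2 \geq m_T$ from $S_{D(T)}$ to all of $S_{D(T^{1/2})}$ using that $D(T)$ is a core for $T^{1/2}$ (graph-norm density plus a limiting argument), thereby obtaining $m(T^{1/2})^2 \geq m_T$; you instead apply Cauchy--Schwarz, $\langle Tx,x\rangle \leq \|Tx\|\,\|x\| = \|Tx\|$ on $S_{D(T)}$, and take infima. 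Your route is more elementary: it needs no core/density input at all, and it also sidesteps a small wrinkle in the paper's limiting argument (the approximating sequence $x_n\to x\in S_{D(T^{1/2})}$ need not consist of unit vectors, so strictly one must normalize using $\|x_n\|\to 1$ before invoking the definition of $m_T$). What the paper's argument buys in exchange is a slightly stronger intermediate fact, namely that the infimum of $\|T^{1/2}x\|$ over the core $S_{D(T)}$ already equals $m(T^{1/2})$; that is of independent interest but not needed for the proposition, and your ``pinching'' observation is exactly the right way to see that it can be bypassed.
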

\begin{proof}
 First note that $D(T)\subseteq D(T^{\frac{1}{2}})$. Next,
\begin{align*}
m_T=\inf \big\{\langle Tx,x\rangle: x\in S_{D(T)} \big\}
      & =\inf \big\{\langle T^{\frac{1}{2}}x,T^{\frac{1}{2}}x\rangle: x\in S_{D(T)} \big\}\\
                                                                                      &\geq \inf \big\{\|T^{\frac{1}{2}}x\|^2: x\in D(T^{\frac{1}{2}}) \big\}\\
                                                                                      & =m(T^{\frac{1}{2}})^2.
\end{align*}
But $m(T^{\frac{1}{2}})=\inf{\{\lambda: \lambda \in \sigma(T^{\frac{1}{2}})}\}$ by Corollary  \ref{squarerootminmod}.  As $\sigma(T)={\{\lambda^2: \lambda \in \sigma(T^{\frac{1}{2}})}\}$,  we have  that $m(T^{\frac{1}{2}})^2=m(T)$ and hence  $m_T\geq m(T)$.

On the other hand, we have
\begin{align*}
m_T &\leq \langle Tx,x\rangle \; \text{for all}\;  x\in S_{D(T)}\\
      &= \langle T^{\frac{1}{2}}x,T^{\frac{1}{2}}x\rangle \; \text{for all}\;  x\in S_{D(T)}  \\
       &=\|T^{\frac{1}{2}}x\|^2\;  \text{for all}\;  x\in S_{D(T)}.
\end{align*}
Next, we claim that the above inequality holds for all $x\in D(T^{\frac{1}{2}})$. To this end, let $x\in D(T^{\frac{1}{2}})$. Since, $D(T)$ is a core for $T^{\frac{1}{2}}$, there exists a sequence $(x_n)\subset D(T)$ such that $\displaystyle \lim_{n\rightarrow \infty}x_n\rightarrow x$ and $\displaystyle \lim_{n\rightarrow \infty}T^{\frac{1}{2}}x_n=T^{\frac{1}{2}}x$. Hence
$\|T^{\frac{1}{2}}x\|^2=\displaystyle \lim_{n\rightarrow \infty}\|T^{\frac{1}{2}}x_n\|^2 \geq m_T$. As this is true for all $x\in D(T^{\frac{1}{2}})$, it follows that $m(T)\geq m_T$.

By the above two observations the conclusion follows.
\end{proof}

\begin{prop}\label{adjointminmattaining}
Let $T\in \mathcal C(H_1,H_2)$ be densely defined and $m(T)=m(T^*)$.  Also, assume that $R(T)$ is closed. Then $T\in \mathcal M_{c}(H_1,H_2)$ if and only if $T^*\in \mathcal M_{c}(H_2,H_1)$.
\end{prop}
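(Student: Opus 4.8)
The plan is to transfer the minimum attaining property between $T$ and $T^*$ through their moduli $|T|=(T^*T)^{1/2}$ and $|T^*|=(TT^*)^{1/2}$, after first separating the cases $m(T)=0$ and $m(T)>0$.

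I would first dispose of the case $m(T)=0$. Since $R(T)$ is closed, Proposition \ref{equivalentclosedrange} shows that $R(T^*)$ is also closed, and by hypothesis $m(T^*)=m(T)=0$. By the remark following Definition \ref{minmmodulus}, an operator with closed range has zero minimum modulus precisely when it fails to be injective; hence both $T$ and $T^*$ are non-injective and each possesses a unit null vector, so $T$ and $T^*$ lie in $\mathcal{M}$ trivially. It is exactly the hypothesis $m(T)=m(T^*)$ that rules out the mixed situation in which $T$ is non-injective but onto (so that $T^*$ is injective), which would otherwise break the equivalence.

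Now suppose $m(T)=m(T^*)>0$. By the same remark, both $T$ and $T^*$ are injective, using that $R(T)$ and $R(T^*)$ are closed. Writing the polar decomposition $T=V|T|$ recalled in Section 2, where $V$ is a partial isometry with initial space $\overline{R(T^*)}$ and range $\overline{R(T)}$, injectivity of $T$ gives $\overline{R(T^*)}=N(T)^\perp=H_1$, while injectivity of $T^*$ together with the closedness of $R(T)$ gives $R(T)=N(T^*)^\perp=H_2$; thus $V$ is unitary. The heart of the argument is then a chain of equivalences. Because $D(T)=D(|T|)$, $\|Tx\|=\||T|x\|$ for all $x\in D(T)$, and $m(T)=m(|T|)$, the operator $T$ is minimum attaining if and only if $|T|$ is, and likewise $T^*$ is minimum attaining if and only if $|T^*|$ is. Taking adjoints gives $T^*=|T|V^*$, whence $TT^*=V|T|^2V^*$; since $V$ is unitary, uniqueness of the positive square root (Theorem \ref{squareroot}) yields $|T^*|=V|T|V^*$. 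The substitution $z=V^*y$ then sets up a norm-preserving bijection of unit spheres under which $\||T^*|y\|=\||T|V^*y\|$, so $|T^*|$ attains its minimum exactly when $|T|$ does. Assembling these gives
\[ T\in\mathcal{M}(H_1,H_2) \iff |T|\ \text{attains} \iff |T^*|\ \text{attains} \iff T^*\in\mathcal{M}(H_2,H_1), \]
as required.

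The step I expect to demand the most care is the operator identity $|T^*|=V|T|V^*$ in the unbounded setting: one must verify that the domains correspond correctly (that is, $y\in D(|T^*|)$ if and only if $V^*y\in D(|T|)$) and that $V$ is genuinely unitary rather than merely a partial isometry. This is precisely where the injectivity of both $T$ and $T^*$, forced by the hypothesis $m(T)=m(T^*)>0$ together with the closed range assumption, does the essential work.
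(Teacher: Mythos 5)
Your proof is correct, but it takes a genuinely different route from the paper's. In the case $m(T)=m(T^*)>0$ the paper never touches the polar decomposition: combining Theorem \ref{equivalentwithgramoperator} with Proposition \ref{eigenvalueforpositive}, it picks a unit vector $x_0$ with $|T|x_0=m(T)x_0$, notes that then $T^*Tx_0=m(T)|T|x_0$, and checks directly that the normalized vector $Tx_0/\|Tx_0\|$ lies in $S_{D(T^*)}$ and satisfies
\[
\frac{\|T^*Tx_0\|}{\|Tx_0\|}=m(T)\,\frac{\||T|x_0\|}{\|Tx_0\|}=m(T)=m(T^*),
\]
so $T^*$ attains its minimum; the reverse implication then comes for free from the symmetry $T^{**}=T$ and $m(T)=m(T^*)$. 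You instead establish the unitary equivalence $|T^*|=V|T|V^*$ and transfer attainment through $V$. What the paper's route buys is brevity and the avoidance of precisely the delicate points you flagged: no need to show $V$ is unitary, no adjoint identities such as $T^*=|T|V^*$ for an unbounded factor, no domain matching between $|T^*|$ and $V|T|V^*$. All of these you do handle correctly --- $(V|T|)^*=|T|^*V^*=|T|V^*$ is legitimate because $V$ is bounded and everywhere defined, and surjectivity of $T$, forced by injectivity of $T^*$ together with closedness of $R(T)$, is exactly what upgrades the partial isometry to a unitary. What your route buys is structural information: it exhibits $|T|$ and $|T^*|$ as unitarily equivalent, which makes the equality $m(T)=m(T^*)$ automatic once both operators are injective with closed range, and it yields both implications simultaneously without invoking $T^{**}=T$. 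Your treatment of the case $m(T)=0$ (both operators fail to be injective because their ranges are closed, hence both attain trivially at a unit null vector) coincides with the paper's, and your observation that the hypothesis $m(T)=m(T^*)$ is there to exclude the mixed situation --- $T$ non-injective but surjective, so that $T^*$ is injective --- is correct and is left implicit in the paper.
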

\begin{proof}
Clearly, if $m(T)=m(T^*)=0$, since $R(T)$ closed,  both $T$ and $T^*$ are not one-to-one. Hence both are minimum attaining. Now assume that $m(T)>0$. It is sufficient to prove one implication, since $T^{**}=T$ and $m(T)=m(T^*)$. By Proposition \ref{equivalentwithgramoperator}, $T\in \mathcal M_{c}(H_1,H_2)$ if and only if there exists a $x_0\in S_{D(|T|)}$ such that $|T|x_0=m(T)x_0$. That is $T^*Tx_0=m(T)|T|x_0$. Hence
\begin{equation*}\frac{\|T^*Tx_0\|}{\|Tx_0\|}=m(T)\frac{\||T|x_0\|}{\|Tx_0\|}=m(T),
\end{equation*}
proving $T^*\in \mathcal M_{c}(H_2,H_1)$.
\end{proof}
\begin{rmk}
Let $T\in \mathcal C(H)$ be densely defined and  normal. Then $D(T)=D(T^*)$ and $\|Tx\|=\|T^*x\|$ for all $x\in D(T)$. Hence $T\in \mathcal M_{c}(H)$ if and only if $T^*\in \mathcal M_{c}(H)$. Clearly, in this case $m(T)=m(T^*)$. Note that in this case we don't have to assume that the range of $T$  to be closed.

\end{rmk}

We recall that if $T\in \mathcal C(H_1,H_2)$ is densely defined, then the numerical range $W(T)$ of $T$ is defined by $W(T)={\{\langle Tx,x\rangle: x\in S_{D(T)}}\}$.
\begin{prop}\label{eigenvalueforpositive}
If $T\in \mathcal C(H)$  is  positive, then the following are equivalent;
\begin{enumerate}
\item \label{cond1} $T\in \mathcal M_{c}(H)$
\item \label{cond2}$m(T)\in \sigma_p(T)$
\item \label{extremepointpositive}$m(T)$ is an extreme point of $W(T)$.
\end{enumerate}
\end{prop}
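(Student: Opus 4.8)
The plan is to prove the cycle of implications $(\ref{cond1})\Rightarrow(\ref{cond2})\Rightarrow(\ref{extremepointpositive})\Rightarrow(\ref{cond1})$, using throughout the identity $m(T)=m_T=\inf\{\langle Tx,x\rangle:x\in S_{D(T)}\}$ furnished by Proposition \ref{equalityofmodulus}, together with the positivity (hence self-adjointness) of $T$. I would first record the observation that, since $T$ is positive, $W(T)\subseteq[0,\infty)$ is a convex subset of $\mathbb{R}$ whose infimum is exactly $m_T=m(T)$; consequently $m(T)$ is an extreme point of $W(T)$ precisely when $m(T)\in W(T)$, i.e. when the infimum defining $m_T$ is attained by some unit vector. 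This reduces condition $(\ref{extremepointpositive})$ to the attainment of $\langle Tx_0,x_0\rangle=m(T)$.

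For $(\ref{cond1})\Rightarrow(\ref{cond2})$, suppose $x_0\in S_{D(T)}$ satisfies $\|Tx_0\|=m(T)$. Proposition \ref{equalityofmodulus} gives $\langle Tx_0,x_0\rangle\geq m(T)$, while Cauchy--Schwarz gives $\langle Tx_0,x_0\rangle\leq\|Tx_0\|\,\|x_0\|=m(T)$. I would then expand
\[
\|(T-m(T)I)x_0\|^2=\|Tx_0\|^2-2m(T)\langle Tx_0,x_0\rangle+m(T)^2\leq 2m(T)^2-2m(T)^2=0,
\]
which forces $Tx_0=m(T)x_0$, so that $m(T)\in\sigma_p(T)$.

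The implication $(\ref{cond2})\Rightarrow(\ref{extremepointpositive})$ is immediate: a unit eigenvector $x_0$ for $m(T)$ yields $\langle Tx_0,x_0\rangle=m(T)$, so $m(T)\in W(T)$, and being the minimum of $W(T)$ it is an extreme point. For $(\ref{extremepointpositive})\Rightarrow(\ref{cond1})$, an extreme point necessarily belongs to $W(T)$, so there is $x_0\in S_{D(T)}$ with $\langle Tx_0,x_0\rangle=m(T)$. Setting $S:=T-m(T)I$, which is positive with $D(S)=D(T)$, we get $\langle Sx_0,x_0\rangle=0$; writing $S=(S^{1/2})^2$ via Theorem \ref{squareroot} and using $D(S)\subseteq D(S^{1/2})$, I obtain $\|S^{1/2}x_0\|^2=\langle Sx_0,x_0\rangle=0$, hence $S^{1/2}x_0=0$ and therefore $Sx_0=0$, i.e. $Tx_0=m(T)x_0$. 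In particular $\|Tx_0\|=m(T)$, so $T\in\mathcal{M}(H)$.

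These are elementary Hilbert-space manipulations, so I do not expect a deep obstacle; the points demanding care are the domain bookkeeping in the unbounded setting (that $x_0\in D(T)=D(S)\subseteq D(S^{1/2})$, so the identity $\langle Sx_0,x_0\rangle=\|S^{1/2}x_0\|^2$ is legitimate, and that $T-m(T)I$ is genuinely positive, which is precisely Proposition \ref{equalityofmodulus}) and the reduction of condition $(\ref{extremepointpositive})$ to attainment via the convexity of $W(T)$. Once these are in place the three equivalences follow directly.
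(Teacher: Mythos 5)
Your proof is correct, but it takes a genuinely different route from the paper's on both nontrivial implications. For $(\ref{cond1})\Rightarrow(\ref{cond2})$ the paper does not expand $\|(T-m(T)I)x_0\|^2$: it substitutes $x_0=(I+T^2)^{-\frac{1}{2}}y_0$, rewrites $\|Tx_0\|=m(T)$ as $\langle (T^2-m(T)^2I)(I+T^2)^{-1}y_0,y_0\rangle=0$, uses positivity of this \emph{bounded} operator to conclude it annihilates $y_0$, factors $T^2-m(T)^2I=(T+m(T)I)(T-m(T)I)$, and treats the case $m(T)=0$ separately via the identity $N(F_T)=N(T)$ quoted from \cite{shkgr3}. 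Your three-line expansion needs only $\langle Tx_0,x_0\rangle\geq m_T=m(T)$ (Proposition \ref{equalityofmodulus}) and $m(T)\geq 0$, handles $m(T)=0$ uniformly, and never leaves $D(T)$; the Cauchy--Schwarz upper bound you record is not even needed, since the lower bound alone forces the square of the norm to be nonpositive. For $(\ref{extremepointpositive})\Rightarrow(\ref{cond1})$ the paper simply cites the main theorem of \cite{bernau1} (extreme points of the numerical range of a normal operator are eigenvalues), whereas you argue directly: extremality gives $m(T)\in W(T)$, and then $S:=T-m(T)I$ is closed, self-adjoint and positive, so $0=\langle Sx_0,x_0\rangle=\|S^{\frac{1}{2}}x_0\|^2$ forces $Sx_0=0$, i.e. $Tx_0=m(T)x_0$. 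Your route is self-contained and sidesteps the fact that Bernau's theorem is stated for bounded normal operators, so its invocation for an unbounded positive operator would itself require justification; what the paper's citation buys in exchange is generality (normal rather than merely positive operators) and consistency with the bounded-transform machinery it deploys elsewhere (e.g.\ Proposition \ref{minmattainingboundedtransform}). One simplification to note: your reduction of condition (\ref{extremepointpositive}) to the membership $m(T)\in W(T)$ does not need convexity of $W(T)$ at all, only the identity $m(T)=\inf W(T)$ from Proposition \ref{equalityofmodulus}, because the infimum of a set of real numbers is an extreme point of that set precisely when it belongs to the set; so the appeal to convexity of the numerical range can be dropped entirely.
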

\begin{proof}
Proof of $ (\ref{cond1})\Rightarrow (\ref{cond2}):$  Choose  $x_0\in S_{D(T)}$ such that $\|Tx_0\|=m(T)$. Since, $T-m(T)I$ is positive, and by the Cauchy-Scwarz inequality,
we get that
\begin{equation*}
 m(T)\leq \langle Tx_0,x_0\rangle \leq \|Tx_0\|=m(T),
\end{equation*}
or $m(T)=\langle Tx_0,x_0\rangle$. Therefore,
\begin{align*}
\|Tx_0-m(T)x_0\|^2&=\|Tx_0\|^2+m(T)^2-2m(T)\langle Tx_0,x_0\rangle \\
                  &=2m(T)^2-2m(T)^2\\
                  &=0.
\end{align*}
That is, $Tx_0=m(T)x_0$. Clearly, if $m(T)\in \sigma_p(T)$, then $T\in \mathcal M_{c}(H)$.



Proof of $(\ref{cond2})\Rightarrow  (\ref{extremepointpositive}):$ Let $x_0\in S_{D(T)}$ be such that $Tx_0=m(T)x_0$. Then $m(T)=\langle Tx_0,x_0\rangle \in W(T)$. Since, $m(T)=m_T$ by Proposition \ref{equalityofmodulus}, the conclusion follows. The other way implication follows by  the main theorem of \cite{bernau1}.
\end{proof}
Using Proposition \ref{eigenvalueforpositive}, we can prove the following.
\begin{prop}\label{equivalentwithsquarerroot}
Let $T\in \mathcal C(H)$ be densely defined and positive. Then $T\in \mathcal M_{c}(H)$ if and only if  $T^{\frac{1}{2}}\in \mathcal M_{c}(H)$.
\end{prop}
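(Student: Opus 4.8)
The plan is to reduce the equivalence to a statement about point spectra, using Proposition \ref{eigenvalueforpositive}, and then exploit the algebraic factorization of $T - m(T)I$ through $T^{\frac{1}{2}}$.

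First, since both $T$ and $T^{\frac{1}{2}}$ are positive and densely defined, Proposition \ref{eigenvalueforpositive} lets me replace the minimum attaining property by membership of the minimum modulus in the point spectrum: $T\in \mathcal M(H)$ if and only if $m(T)\in \sigma_p(T)$, and $T^{\frac{1}{2}}\in \mathcal M(H)$ if and only if $m(T^{\frac{1}{2}})\in \sigma_p(T^{\frac{1}{2}})$. Next I record the numerical relation between the two moduli: applying Corollary \ref{squarerootminmod} to the operator $S:=T^{\frac{1}{2}}$ and using $S^*S=S^2=T$ gives $m(T)=m(S)^2$. Writing $\mu:=m(T^{\frac{1}{2}})\ge 0$, this reads $m(T)=\mu^2$, so the whole statement collapses to the claim that $\mu\in \sigma_p(T^{\frac{1}{2}})$ if and only if $\mu^2\in \sigma_p(T)$.

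For the forward implication, I would take $x_0\neq 0$ with $T^{\frac{1}{2}}x_0=\mu x_0$; then $T^{\frac{1}{2}}x_0=\mu x_0\in D(T^{\frac{1}{2}})$ forces $x_0\in D\big((T^{\frac{1}{2}})^2\big)=D(T)$, and $Tx_0=T^{\frac{1}{2}}(T^{\frac{1}{2}}x_0)=\mu^2 x_0$, so $\mu^2=m(T)\in \sigma_p(T)$. For the converse, take $x_0\neq 0$ in $D(T)$ with $Tx_0=\mu^2 x_0$. Since $D(T)\subseteq D(T^{\frac{1}{2}})$, the vector $y_0:=(T^{\frac{1}{2}}+\mu I)x_0$ lies in $D(T^{\frac{1}{2}})$, and a direct computation via the factorization $(T^{\frac{1}{2}}-\mu I)(T^{\frac{1}{2}}+\mu I)=T-\mu^2 I$ on $D(T)$ yields $(T^{\frac{1}{2}}-\mu I)y_0=(T-\mu^2 I)x_0=0$.

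It remains to rule out $y_0=0$, and this is the step requiring the most care. When $\mu>0$, positivity gives $T^{\frac{1}{2}}+\mu I\ge \mu I>0$, so $T^{\frac{1}{2}}+\mu I$ is bounded below and injective; hence $y_0\neq 0$ is an eigenvector of $T^{\frac{1}{2}}$ for $\mu$. When $\mu=0$ (equivalently $m(T)=0$) the factorization degenerates, so I would argue directly: $Tx_0=0$ forces $x_0\in N(T)=N(T^{\frac{1}{2}})$, the null spaces agreeing because $\|T^{\frac{1}{2}}x\|^2=\langle Tx,x\rangle$ and $x\in D(T)$ whenever $T^{\frac{1}{2}}x\in D(T^{\frac{1}{2}})$, so $T^{\frac{1}{2}}x_0=0=\mu x_0$ and $\mu\in \sigma_p(T^{\frac{1}{2}})$. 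In either case $\mu\in \sigma_p(T^{\frac{1}{2}})$, completing the equivalence. The main obstacle is precisely this nonvanishing and degeneracy analysis, together with the unbounded-operator domain bookkeeping (ensuring $y_0\in D(T^{\frac{1}{2}})$ and that the operator identities hold on the correct domains); once these are dispatched, the rest is formal.
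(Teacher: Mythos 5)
Your proof is correct and follows essentially the same route as the paper's: reduce the minimum attaining property to point-spectrum membership via Proposition \ref{eigenvalueforpositive}, use the identity $m(T)=m(T^{\frac{1}{2}})^2$, and in the converse direction exploit the factorization of $T-m(T)I$ through $T^{\frac{1}{2}}\pm m(T)^{\frac{1}{2}}I$ together with the injectivity (bounded invertibility) of $T^{\frac{1}{2}}+m(T)^{\frac{1}{2}}I$, treating $m(T)=0$ separately. The only cosmetic difference is that you compose the factors in the opposite order, so $(T^{\frac{1}{2}}+\mu I)x_0$ becomes the eigenvector rather than $x_0$ itself, and your handling of the degenerate case and the domain bookkeeping is somewhat more explicit than the paper's.
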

\begin{proof}
If $T^\frac{1}{2}\in \mathcal M_{c}(H)$, then $m(T^\frac{1}{2})\in \sigma_p(T^{\frac{1}{2}})$, which implies that $m(T)\in \sigma_p(T)$. By Proposition \ref{eigenvalueforpositive}, $T\in \mathcal M_{c}(H)$.

Conversely, if $T\in \mathcal M_{c}(H)$, then $m(T)\in \sigma_p(T)$, by Proposition \ref{eigenvalueforpositive}.  If $m(T)=0$, then $m(T^{\frac{1}{2}})=0$ and hence $T^{\frac{1}{2}}\in \mathcal M_{c}(H)$. Next, assume that $m(T)>0$. Then $m(T^{\frac{1}{2}})>0$ and
\begin{equation*}
T-m(T)I=\big(T^\frac{1}{2}+m(T)^\frac{1}{2}I\big)\big( T^\frac{1}{2}-m(T)^\frac{1}{2}I\big).
\end{equation*}
As $T^\frac{1}{2}+m(T)^\frac{1}{2}I$ has a bounded inverse, we have that $T^\frac{1}{2}-m(T)^\frac{1}{2}I$ is not one-to-one. Hence $m(T^\frac{1}{2})\in \sigma_p(T^\frac{1}{2})$. The conclusion  follows by Proposition \ref{eigenvalueforpositive}.
\end{proof}
\begin{thm}\label{equivalentwithgramoperator}
Let $T\in \mathcal C(H_1,H_2)$ be densely defined.  Then the following
statements are equivalent:
\begin{enumerate}
\item \label{main} $T\in \mathcal M_{c}(H_1,H_2)$
\item \label{mainmod}$|T|\in \mathcal M(H_1)$
\item \label{maingram}$T^*T\in \mathcal M(H_1)$.
\end{enumerate}
\end{thm}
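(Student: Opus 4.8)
The plan is to establish the two equivalences $(\ref{main})\Leftrightarrow(\ref{mainmod})$ and $(\ref{mainmod})\Leftrightarrow(\ref{maingram})$ separately, each reducing to facts already recorded in the excerpt, and then chain them. I expect no real obstacle: both equivalences follow almost mechanically once the domain and norm identities for the modulus are in place.

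For $(\ref{main})\Leftrightarrow(\ref{mainmod})$, I would invoke the properties of $|T|=(T^*T)^{\frac12}$ noted after Theorem \ref{squareroot}: namely $D(|T|)=D(T)$ and $\|Tx\|=\||T|x\|$ for every $x\in D(T)$, which also give $m(T)=m(|T|)$. Since $D(T)=D(|T|)$ we have $S_{D(T)}=S_{D(|T|)}$, and for any unit vector $x_0$ in this common sphere the condition $\|Tx_0\|=m(T)$ is literally identical to the condition $\||T|x_0\|=m(|T|)$. Hence $T$ attains its minimum modulus at $x_0$ precisely when $|T|$ does, and the two statements hold with the same witnessing vector. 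This is the one step requiring care, but it is only bookkeeping about domains.

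For $(\ref{mainmod})\Leftrightarrow(\ref{maingram})$, I would observe that $T^*T$ is a densely defined positive operator (standard for densely defined closed $T$) and that $(T^*T)^{\frac12}=|T|$ by definition. Then I apply Proposition \ref{equivalentwithsquarerroot} with the positive operator $T^*T$ playing the role of the operator there: it yields $T^*T\in\mathcal M(H_1)$ if and only if $(T^*T)^{\frac12}=|T|\in\mathcal M(H_1)$. That is exactly the desired equivalence.

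Combining the two, $T\in\mathcal M(H_1,H_2)\Leftrightarrow|T|\in\mathcal M(H_1)\Leftrightarrow T^*T\in\mathcal M(H_1)$, which completes the proof. The only point worth double-checking is that $T^*T$ is genuinely densely defined and positive so that Proposition \ref{equivalentwithsquarerroot} applies; this holds because $T$ is densely defined and closed, and indeed $D(T^*T)$ is even a core for $T$ as noted in the Preliminaries.
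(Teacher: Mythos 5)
Your proof is correct and follows essentially the same route as the paper: the equivalence $(\ref{main})\Leftrightarrow(\ref{mainmod})$ via the identities $D(T)=D(|T|)$ and $\|Tx\|=\||T|x\|$, and the equivalence $(\ref{mainmod})\Leftrightarrow(\ref{maingram})$ via $T^*T=|T|^2$ together with Proposition \ref{equivalentwithsquarerroot}. Your added remarks on the shared unit sphere $S_{D(T)}=S_{D(|T|)}$ and on $T^*T$ being densely defined and positive merely make explicit what the paper leaves implicit.
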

\begin{proof}
The equivalence of (\ref{main}) and (\ref{mainmod}) follows by the observation that $D(T)=D(|T|)$ and $\|Tx\|=\||T|x\|$ for all $x\in D(T)$. The equivalence of (\ref{mainmod}) and (\ref{maingram}) follows by the fact that $T^*T=|T|^2$ and Proposition \ref{equivalentwithsquarerroot}.
\end{proof}

\begin{eg}
  Let $D={\{(x_n)\in \ell^2: \displaystyle \sum_{n=1}^{\infty}n^2|x_n|^2<\infty}\}$. Define $T:D\rightarrow \ell^2$ by
  \begin{equation*}
    T((x_1,x_2,x_3,\dots,))=((0,x_1,2x_2,3x_3,\dots)),\; \text{for all}\; (x_n)\in D.
  \end{equation*}
  Clearly, $T$ is densely defined closed operator. Note that $T^*T(x_n)=(n^2x_n)$ for all $(x_n)\in D(T^*T)$. It can be easily calculated that $\sigma(T^*T)=\sigma_{p}(T^*T)={\{n^2:n\in \mathbb N}\}$. Hence $m(T^*T)=1$ and $T^*T\in \mathcal M(\ell^2)$. By Theorem \ref{equivalentwithgramoperator}, we can conclude that $T\in \mathcal M(\ell^2)$ and by Corollary \ref{squarerootminmod} $m(T)=1$.
\end{eg}
\begin{prop}
Let $T$ be densely defined and positive. Then
 $T\in \mathcal M_{c}(H)$ if and only if $T^n\in \mathcal M_{c}(H)$ for each $n\geq 1$.
\end{prop}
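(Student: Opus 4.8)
The plan is to reduce everything to the point-spectrum characterization of the minimum attaining property for positive operators established in Proposition \ref{eigenvalueforpositive}, together with the spectral formula $m(T^n)=m(T)^n$ from Proposition \ref{fnlcalulusformods}. Note first that the reverse implication is immediate: if $T^n\in\mathcal M(H)$ for every $n\geq 1$, then taking $n=1$ gives $T\in\mathcal M(H)$. Thus the content lies entirely in the forward implication, and the hypothesis ``for each $n\geq 1$'' is really only used for a single instance on the easy side.

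For the forward direction, suppose $T\in\mathcal M(H)$. Since $T$ is positive, Proposition \ref{eigenvalueforpositive} gives $m(T)\in\sigma_p(T)$, so there is a unit vector $x_0\in S_{D(T)}$ with $Tx_0=m(T)x_0$. I would then argue by induction that $x_0\in D(T^n)$ and $T^nx_0=m(T)^nx_0$ for every $n\geq 1$: if $x_0\in D(T^{n-1})$ with $T^{n-1}x_0=m(T)^{n-1}x_0$, then $T^{n-1}x_0$ is a scalar multiple of $x_0$ and hence lies in $D(T)$, so $x_0\in D(T^n)$ and $T^nx_0=T\big(m(T)^{n-1}x_0\big)=m(T)^nx_0$.

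Next I would record that $T^n$ is again a densely defined positive operator (via the functional calculus for the positive self-adjoint operator $T$ applied to $t\mapsto t^n$), so that Proposition \ref{eigenvalueforpositive} applies to $T^n$ as well. Since $T$ is normal, Proposition \ref{fnlcalulusformods}(\ref{polymod}) yields $m(T^n)=m(T)^n$. Combining this with the eigenvalue equation $T^nx_0=m(T)^nx_0=m(T^n)x_0$ shows $m(T^n)\in\sigma_p(T^n)$, and a final appeal to Proposition \ref{eigenvalueforpositive} for $T^n$ gives $T^n\in\mathcal M(H)$, completing the forward direction.

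The only genuinely delicate point is the bookkeeping with the domains of the unbounded powers $T^n$: one must check that the eigenvector produced for $T$ actually lies in $D(T^n)$ (handled by the induction above) and that $T^n$, defined as an iterated composition of closed operators, coincides with the positive operator furnished by the functional calculus, so that both Proposition \ref{eigenvalueforpositive} and Proposition \ref{fnlcalulusformods} are legitimately available for $T^n$. Everything else is a direct substitution into the spectral characterizations already proved.
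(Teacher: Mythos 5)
Your proof is correct for the statement as written, and it rests on the same two pillars as the paper's (one-line) proof: the eigenvalue characterization of Proposition \ref{eigenvalueforpositive} and the identity $m(T^n)=m(T)^n$ from Proposition \ref{fnlcalulusformods}. The difference is in the mechanism. The paper invokes the spectral mapping theorem, which transfers $m(T)\in\sigma_p(T)$ to $m(T)^n=m(T^n)\in\sigma_p(T^n)$ \emph{and back}, and therefore yields the stronger per-exponent equivalence: for each fixed $n$, $T\in\mathcal M(H)$ if and only if $T^n\in\mathcal M(H)$. You instead prove only the forward implication for each $n$, by an explicit induction propagating the eigenvector $x_0$ of $T$ into $D(T^n)$, and you dispose of the converse by the quantifier: since the statement reads ``$T^n\in\mathcal M(H)$ for each $n\geq 1$,'' the case $n=1$ settles it. Both routes are legitimate. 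Yours is more elementary --- it avoids the point-spectrum form of the spectral mapping theorem for unbounded operators --- and your domain bookkeeping (that $x_0\in D(T^n)$, and that the iterated composition $T^n$ agrees with the functional-calculus power, so that Propositions \ref{eigenvalueforpositive} and \ref{fnlcalulusformods} genuinely apply to $T^n$) makes explicit exactly what the paper's one-liner leaves implicit. What you give up is the fixed-$n$ converse: to get $T^n\in\mathcal M(H)\Rightarrow T\in\mathcal M(H)$ for a single $n$ one would factor $T^n-m(T)^nI=(T-m(T)I)\prod_{k=1}^{n-1}\bigl(T-\omega_k m(T)I\bigr)$ with $\omega_k$ the nontrivial $n$-th roots of unity, note that each factor with $\omega_k\neq 1$ is boundedly invertible because $T$ is positive (the case $m(T)=0$ being handled by $N(T^n)=N(T)$ for positive $T$), and conclude that $T-m(T)I$ fails to be injective; this is precisely what the spectral mapping theorem packages in the paper's argument.
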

\begin{proof}
Let $T\in \mathcal M_{c}(H)$. Then by Proposition \ref{eigenvalueforpositive}, there exists $x_0\in D(T)$ such that $Tx_0=m(T)x_0$. Observe that $x_0\in D(T^2)$. This implies that $T^2x_0=m(T)^2x_0=m(T^2)x_0$, by Proposition \ref{fnlcalulusformods}. That is $x_0\in D(T^4)\subseteq D(T^3)$. With this, we have $T^3x_0=m(T)^3x_0$. By the induction argument we can show that $T^nx_0=m(T)^nx_0$. By Proposition \ref{eigenvalueforpositive}, it follows that $T^n\in \mathcal M_{c}(H)$.

To prove the converse, assume that $n>1$ and  $T^n\in \mathcal M_{c}(H)$. Choose $x_0\in S_{D(T)}$ such that $T^nx_0=m(T^n)x_0$. As $m(T^n)=m(T)^n$, if $m(T^n)=0$, then $m(T)=0$. In this case $x_o\in N(T^n)$. That is $T^{n-2}x_0\in N(T^2)=N(T)$. Hence $x_0\in N(T^{n-1})$. Proceeding in this we can conclude that $x_0\in N(T)$, proving $T\in \mathcal M_{c}(H)$.

Next assume that $m(T)>0$. Since $T$ is positive, $T^{-1}\in \mathcal B(H)$. Hence $Tx_0=m(T)x_0$ implies that
\begin{equation*}
T^{n-1}x_0=T^{-1}T^{n}x_0=m(T)^nT^{-1}x_0 =m(T)^n\frac{x_0}{m(T)}=m(T)^{n-1}x_0.
\end{equation*}
By proceeding in this way, we can conclude that $Tx_0=m(T)x_0$. Hence $T\in \mathcal M_{c}(H)$.
\end{proof}
\begin{prop}\label{minmattainingclosedrange}
Let $T\in \mathcal M_{c}(H_1,H_2)$ be one-to-one. Then $R(T)$ is closed.
\end{prop}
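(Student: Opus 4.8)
The plan is to reduce everything to the single quantity $m(T)$ and exploit the fact that a vector \emph{attaining} the minimum modulus must be a genuine unit vector. The key observation is that for a one-to-one minimum attaining operator, the minimum modulus cannot be zero, and then closedness of the range follows immediately from the characterizations already recorded in the preliminaries. So the strategy is: first show $m(T)>0$, then invoke Proposition \ref{equivalentclosedrange} (equivalently Remark (b)).

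First I would argue by contradiction that $m(T)>0$. Since $T\in\mathcal M(H_1,H_2)$, by definition there exists $x_0\in S_{D(T)}$, so in particular $\|x_0\|=1$, such that $\|Tx_0\|=m(T)$. Suppose $m(T)=0$. Then $\|Tx_0\|=0$, i.e. $Tx_0=0$, so $x_0\in N(T)$. But $T$ is one-to-one, so $N(T)=\{0\}$ and hence $x_0=0$, contradicting $\|x_0\|=1$. Therefore $m(T)>0$. This is the heart of the argument: the minimum attaining hypothesis converts the infimum $m(T)=0$ into an \emph{achieved} value $0$ at a unit vector, which can only be a nonzero element of the kernel, and injectivity rules this out. (Note that without the minimum attaining hypothesis this fails, as the second item of the Remark following the definition of $\mathcal M$ shows: an injective operator with non-closed range has $m(T)=0$ yet no vector attains it.)

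Finally, since $T$ is one-to-one, Remark (a) gives $m(T)=\gamma(T)$, so $\gamma(T)=m(T)>0$. By the equivalence of items in Proposition \ref{equivalentclosedrange} (namely that $\gamma(T)>0$ is equivalent to $R(T)$ being closed), it follows that $R(T)$ is closed. Alternatively one may conclude directly from Remark (b), which states that $m(T)>0$ if and only if $R(T)$ is closed and $T$ is one-to-one.

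I do not expect a serious obstacle here, as the proof is short once the right quantity is isolated; the only subtlety worth flagging is the interplay between the \emph{attainment} of the infimum and injectivity, which is precisely what forces $m(T)>0$ rather than merely $m(T)\ge 0$.
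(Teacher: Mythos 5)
Your proposal is correct and is essentially the paper's own argument: both proofs hinge on the observation that a minimum-attaining injective operator cannot have $m(T)=0$, since an attaining unit vector would then lie in $N(T)$, and both then conclude via the standard equivalence ($m(T)>0$, equivalently $\gamma(T)>0$, gives closed range). The only cosmetic difference is that the paper runs the contradiction on ``$R(T)$ not closed'' while you run it on ``$m(T)=0$''; these are the same argument.
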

\begin{proof}
If $R(T)$ is not closed, then $m(T)=0$. Since $T\in \mathcal M_{c}(H_1,H_2)$, there exists $x_0\in S_{D(T)}$ such that $\|Tx_0\|=0$, but contradicts $T$ to be one-to-one. Thus $R(T)$ is closed.
\end{proof}
\begin{rmk}
The condition one-one ness  is not necessary in Proposition \ref{minmattainingclosedrange}.
For example, let $P$ be a bounded orthogonal projection.
Then $R(P)$ is closed and it is minimum attaining but not one-to-one.
\end{rmk}
\begin{cor}
Let $T\in \mathcal M_{c}(H_1,H_2)$. Then $T$ is one-to-one if and only $T$ bounded below.
\end{cor}

Next, we will establish a relation between the minimum attaining property of the operator and the norm attaining property of its generalized inverse. First we prove a few results needed for this purpose.
\begin{prop}\label{mprelations}
Let $T\in \mathcal C(H_1,H_2)$ be densely defined. Then
\begin{enumerate}
\item\label{mprelations1} $|T^\dagger|=|T^*|^\dagger$
\item\label{mprelations2} $|(T^\dagger)^*|=|T|^\dagger$.
\end{enumerate}
\end{prop}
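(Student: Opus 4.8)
The plan is to reduce both identities to a single statement about positive operators and then prove that statement. Writing $|T^\dagger|^2=(T^\dagger)^*T^\dagger$ and using the relation $T^{\dagger *}=T^{*\dagger}$ together with $(TT^*)^\dagger=T^{*\dagger}T^\dagger$ from the theorem following Definition~\ref{geninv}, we get
\begin{equation*}
|T^\dagger|^2=(T^\dagger)^*T^\dagger=T^{*\dagger}T^\dagger=(TT^*)^\dagger,
\end{equation*}
while $|T^*|^2=(T^*)^*T^*=TT^*$. Hence, setting $A:=TT^*$ (which is positive), identity (\ref{mprelations1}) is exactly the assertion $(A^\dagger)^{\frac{1}{2}}=(A^{\frac{1}{2}})^\dagger$. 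Similarly, using $\big((T^\dagger)^*\big)^*=T^\dagger$ (valid since $T^\dagger$ is densely defined and closed), the same adjoint relation, and $(T^*T)^\dagger=T^\dagger T^{*\dagger}$, we find
\begin{equation*}
|(T^\dagger)^*|^2=T^\dagger(T^\dagger)^*=T^\dagger T^{*\dagger}=(T^*T)^\dagger,
\end{equation*}
whereas $|T|^2=T^*T$; so with $B:=T^*T$ identity (\ref{mprelations2}) becomes $(B^\dagger)^{\frac{1}{2}}=(B^{\frac{1}{2}})^\dagger$. Thus both parts follow once we show: if $A\in\mathcal C(H)$ is densely defined and positive, then $(A^{\frac{1}{2}})^\dagger=(A^\dagger)^{\frac{1}{2}}$.

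To prove this, I would set $S:=A^{\frac{1}{2}}$, so that $S$ is positive, and first argue that $S^\dagger$ is again positive. Self-adjointness of $S^\dagger$ is immediate from $S^{\dagger *}=S^{*\dagger}=S^\dagger$. For nonnegativity of the form, take $y\in D(S^\dagger)$ and put $x:=S^\dagger y\in R(S^\dagger)=C(S)\subseteq N(S)^\bot$. Decomposing $y=P_{\overline{R(S)}}y+w$ with $w\in R(S)^\bot=N(S^*)=N(S)$ and using $SS^\dagger y=P_{\overline{R(S)}}y=Sx$, one obtains
\begin{equation*}
\langle S^\dagger y,y\rangle=\langle x,Sx+w\rangle=\langle x,Sx\rangle+\langle x,w\rangle=\langle x,Sx\rangle\geq 0,
\end{equation*}
since $\langle x,w\rangle=0$ (because $x\in N(S)^\bot$ and $w\in N(S)$) and $S$ is positive. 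Hence $S^\dagger$ is positive. Next, applying $(T^*T)^\dagger=T^\dagger T^{*\dagger}$ with $T=S$ and using $S=S^*$ and $S^{\dagger *}=S^\dagger$, we get
\begin{equation*}
A^\dagger=(S^2)^\dagger=(S^*S)^\dagger=S^\dagger(S^\dagger)^*=(S^\dagger)^2.
\end{equation*}
Since $S^\dagger$ is positive and its square equals $A^\dagger$, the uniqueness of the positive square root (Theorem~\ref{squareroot}) yields $(A^\dagger)^{\frac{1}{2}}=S^\dagger=(A^{\frac{1}{2}})^\dagger$, completing the argument.

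The step I expect to be the main obstacle is not the algebra, which is handed to us by the three Moore--Penrose identities $T^{\dagger *}=T^{*\dagger}$, $(T^*T)^\dagger=T^\dagger T^{*\dagger}$, and $(TT^*)^\dagger=T^{*\dagger}T^\dagger$, but rather the domain bookkeeping concealed in the operator equalities $(S^2)^\dagger=(S^\dagger)^2$ and $(A^\dagger)^{\frac{1}{2}}=S^\dagger$. For unbounded $S$ these are identities of unbounded operators, so one must confirm that they hold with matching domains (for instance that the composition $(S^\dagger)^2$ has precisely the domain of $A^\dagger$) rather than merely on a common core; the positivity computation for $S^\dagger$ must likewise be carried out on all of $D(S^\dagger)$. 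Once these domain matters and the positivity of $S^\dagger$ are secured, the reduction and the uniqueness of the positive square root deliver both identities at once.
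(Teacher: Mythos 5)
Your proof is correct and follows essentially the same route as the paper: both reduce the claim, via the Moore--Penrose identities $T^{\dagger *}=T^{*\dagger}$, $(TT^*)^\dagger=T^{*\dagger}T^\dagger$ and $(T^*T)^\dagger=T^\dagger T^{*\dagger}$, to the chain $|T^\dagger|^2=(TT^*)^\dagger=(|T^*|^2)^\dagger=\big(|T^*|^\dagger\big)^2$ and then extract the unique positive square root. The only differences are presentational: you prove explicitly that $S^\dagger$ is positive when $S$ is positive (a step the paper uses implicitly in taking the square root), and you prove the second identity by a parallel computation, whereas the paper obtains it from the first by substituting $T^*$ for $T$.
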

\begin{proof}
Proof of (\ref{mprelations1}): By definition of $|T^\dagger|$, and by Theorem \ref{propertiesmpi},
\begin{align*}
|T^\dagger|=\big((T^\dagger)^*T^\dagger\big)^\frac{1}{2}
           =\big((TT^*)^\dagger\big)^\frac{1}{2}
           =\big((|T^*|^2)^\dagger\big)^\frac{1}{2}
           =\big((|T^*|)^\dagger (|T^*|)^\dagger\big)^\frac{1}{2}
           &=|T^*|^\dagger.
\end{align*}
The proof of (\ref{mprelations2}) can be obtained by  replacing  $T$ by $T^*$ in (\ref{mprelations1}) and observing  that $(T^*)^{\dagger}=(T^*)^{\dagger}$ and $(T^*)^*=T$.
\end{proof}

\begin{thm}\label{equivalencegeninvcondition}
Let $T\in \mathcal C(H_1,H_2)$ be densely defined and one-to-one. Then the following statements are equivalent;
\begin{enumerate}

\item \label{minmattaining}$T\in \mathcal M_{c}(H_1,H_2)$
\item \label{geninvchar} $R(T)$ is closed and $T^{\dagger}\in \mathcal{N}(H_2,H_1)$.
\end{enumerate}
\end{thm}
\begin{proof}
First assume that $T\in \mathcal M_{c}(H_1,H_2)$. Note that $R(T)$ is closed, by Proposition \ref{minmattainingclosedrange}. As $T$ is one-to-one, $m(T)>0$.  Choose $x_0\in D(T)$ such that $|T|x_0=m(T)x_0$. Hence
\begin{equation*}
m(T)|T|^{-1}x_0=|T|^{-1}|T|x_0=x_0.
\end{equation*}
So
\begin{equation}\label{eqcondeq1}
|T|^{-1}x_0=\frac{1}{m(|T|)}x_0.
\end{equation}
By Proposition \ref{mprelations}, we have $|T|^{-1}=|(T^{\dagger})^*|=(T^*)^{\dagger}$. Since $m(|T|)=m(T)=\dfrac{1}{\|T^{\dagger}\|}=\dfrac{1}{\||T^{\dagger}|\|}$, Equation \ref{eqcondeq1}, takes the form $|T^{\dagger}|x_0=\||T^{\dagger}|\|x_0$. Hence the conclusion follows.

To prove the  other implication, let $T^{\dagger}\in \mathcal N(H_2,H_1)$. Then clearly, $R(T)$ is closed. By \cite[Proposition 2.5]{carvajalneves1}, $S:=(T^{\dagger})^*\in \mathcal N(H_1, H_2)$. Hence by Proposition \ref{mprelations}, we have $|S|=|T|^{\dagger}$. Since $|T|^{\dagger}$ is positive and norm attaining, there exists $x_0\in S_{H_1}$ such that
\begin{equation}\label{modulusofmpi}
|S|x_0=\|S\|x_0=\|T^{\dagger}\|x_0=\frac{1}{m(T)}x_0.
\end{equation}
Note that $x_0\in R(|T|^{\dagger})=C(|T|)=C(T)\subseteq N(T)^{\bot}$. Premultiplying Equation \ref{modulusofmpi} by $|T|$ and noting that $R(|T|)=N(T)^{\bot}$, we have
$|T|x_0=m(T)x_0$, concluding $|T|\in \mathcal M(H_1)$. Hence $T\in \mathcal M_{c}(H_1,H_2)$ by Theorem \ref{equivalentwithgramoperator}.
\end{proof}

Next, we show that minimum attaining property of a closed densely defined operator is related  to the minimum attaining property of the corresponding bounded transform.
If $T\in \mathcal C(H_1,H_2)$ is densely defined, then the operator $Z_{T}:=T(I+T^*T)^{-\frac{1}{2}}$ is called the bounded transform of $T$. More over, $T=Z_T(I-Z_{T}^*Z_{T})^{-\frac{1}{2}}$. We refer \cite[section 7.3, page 142]{schmudgen} for more details about these operators.
\begin{prop}\label{minmattainingboundedtransform}
Let $T\in \mathcal C(H_1,H_2)$ be densely defined. Then
\begin{enumerate}
\item\label{minmodbddtransform} $m(Z_T)=\dfrac{m(T)}{\sqrt{1+m(T)^2}}$
\item \label{minmodinvbddtransform}$m(T)^2=\dfrac{m(Z_T)^2}{1-m(Z_T)^2}$
\item\label{bddtransformreln} $T\in \mathcal M_{c}(H_1,H_2)$ if and only if $Z_T\in \mathcal M_{c}(H_1,H_2)$.
\end{enumerate}
\end{prop}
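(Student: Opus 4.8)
The plan is to reduce the entire proposition to the bounded positive operator $F_T^*F_T$ and its relation to $T^*T$. The engine is the identity
\begin{equation*}
F_T^*F_T = Q_T\,T^*T\,Q_T = T^*T(I+T^*T)^{-1} = I-(I+T^*T)^{-1},
\end{equation*}
which I would establish first. Since $F_T=TQ_T$ with $Q_T=(I+T^*T)^{-1/2}$ a bounded function of $T^*T$, the operator $Q_T$ commutes with $T^*T$, giving $F_T^*F_T=T^*T\,Q_T^{2}=T^*T(I+T^*T)^{-1}$, an everywhere-defined bounded positive operator. In particular $F_T^*F_T=g(T^*T)$, where $g(s)=s/(1+s)$ is continuous and strictly increasing on $[0,\infty)$.

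For (\ref{minmodbddtransform}), $F_T^*F_T$ is positive and bounded, so its minimum modulus is the bottom of its spectrum. By the spectral mapping theorem and the monotonicity of $g$ one gets $m(F_T^*F_T)=\inf\sigma(g(T^*T))=g(\inf\sigma(T^*T))=g(m(T^*T))$. Using Corollary \ref{squarerootminmod} twice, namely $m(T^*T)=m(T)^2$ and $m(F_T^*F_T)=m(F_T)^2$, I obtain $m(F_T)^2=m(T)^2/(1+m(T)^2)$, and taking square roots proves (\ref{minmodbddtransform}). Part (\ref{minmodinvbddtransform}) is then the purely algebraic inversion: solving $a^2=b^2/(1+b^2)$ for $b^2$ gives $b^2=a^2/(1-a^2)$ with $a=m(F_T)$, $b=m(T)$.

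For (\ref{bddtransformreln}), I would first use Theorem \ref{equivalentwithgramoperator} to replace $T\in\mathcal M(H_1,H_2)$ by $T^*T\in\mathcal M(H_1)$, and likewise $F_T\in\mathcal M(H_1,H_2)$ by $F_T^*F_T\in\mathcal M(H_1)$; then Proposition \ref{eigenvalueforpositive} restates these as $m(T^*T)\in\sigma_p(T^*T)$ and $m(F_T^*F_T)\in\sigma_p(F_T^*F_T)$, respectively. The two eigenvalue conditions transfer through $F_T^*F_T=I-(I+T^*T)^{-1}$: if $T^*Tx_0=m(T)^2x_0$, then $(I+T^*T)^{-1}x_0=(1+m(T)^2)^{-1}x_0$, hence $F_T^*F_Tx_0=\frac{m(T)^2}{1+m(T)^2}x_0=m(F_T^*F_T)x_0$. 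Conversely, if $F_T^*F_Tx_0=\mu x_0$ with $\mu=m(F_T^*F_T)$, then $(I+T^*T)^{-1}x_0=(1-\mu)x_0$ with $1-\mu>0$; since $R((I+T^*T)^{-1})=D(T^*T)$ this forces $x_0\in D(T^*T)$, and applying $I+T^*T$ yields $T^*Tx_0=\frac{\mu}{1-\mu}x_0$, which equals $m(T)^2x_0$ by part (\ref{minmodbddtransform}). In both directions the common unit eigenvector is preserved, so the bottom eigenvalue is attained for one operator exactly when it is attained for the other.

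Setting aside the routine algebra, the step needing the most care is the rigorous justification of the operator identity and the eigenvector transfer in the unbounded setting: one must verify that $Q_T$ genuinely commutes with the unbounded $T^*T$, that $F_T^*F_T=I-(I+T^*T)^{-1}$ holds as an identity of bounded operators, and above all that the eigenvector $x_0$ of the bounded operator $F_T^*F_T$ actually lies in $D(T^*T)$ before $I+T^*T$ may be applied to it. These domain issues, rather than any spectral subtlety, are the real content, and they are handled by the functional calculus for the positive self-adjoint operator $T^*T$ together with the fact that $R((I+T^*T)^{-1})=D(T^*T)$.
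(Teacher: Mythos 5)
Your proposal is correct and follows essentially the same route as the paper: the same identity $F_T^*F_T=T^*T(I+T^*T)^{-1}=I-(I+T^*T)^{-1}$ drives parts (\ref{minmodbddtransform})--(\ref{minmodinvbddtransform}), and part (\ref{bddtransformreln}) is exactly the paper's reduction via Theorem \ref{equivalentwithgramoperator} and Proposition \ref{eigenvalueforpositive}. The differences are cosmetic: you obtain (\ref{minmodbddtransform}) by spectral mapping and monotonicity of $s\mapsto s/(1+s)$ rather than the paper's use of $m(I-A)=1-\|A\|$ together with Remark \ref{reciprocalinversenorm}, you get (\ref{minmodinvbddtransform}) by algebraically inverting (\ref{minmodbddtransform}) instead of the paper's separate computation from $T^*T=(I-F_T^*F_T)^{-1}-I$, and in (\ref{bddtransformreln}) you write out in full the eigenvector transfer (including the domain argument via $R((I+T^*T)^{-1})=D(T^*T)$) that the paper compresses to ``it can be verified''.
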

\begin{proof}
Proof of (\ref{minmodbddtransform}): We have that
\begin{equation*}
m(Z_T)^2=m(Z_T^*Z_T)=m(I-(I+T^*T)^{-1}) =1-\|(I+T^*T)^{-1}\|=1-\dfrac{1}{m(I+T^*T)},
\end{equation*}
by Remark  \ref{reciprocalinversenorm}. Hence
\begin{align*}
                    m(Z_T)^2&=1-\dfrac{1}{1+m(T)^2}\\
                    &=\dfrac{m(T)^2}{1+m(T)^2}.
\end{align*}

Proof of (\ref{minmodinvbddtransform}):
Note that $T^*T=Z_T^*Z_T(I-Z_T^*Z_T)^{-1}=(I-Z_T^*Z_T)^{-1}-I$. Thus
\begin{equation*}
m(T)^2=m(T^*T)=m(I-Z_T^*Z_T)^{-1}-1=\dfrac{1}{\|I-Z_T^*Z_T\|}-1=\dfrac{1}{1-m(Z_T^*Z_T)}-1,
\end{equation*}
by Remark \ref{reciprocalinversenorm}.
Hence  \begin{equation*}
m(T)^2 =\dfrac{1}{1-m(Z_T)^2}-1=\dfrac{m(Z_T)^2}{1-m(Z_T)^2}.
\end{equation*}
Proof of (\ref{bddtransformreln}):
In view of Theorem \ref{equivalentwithgramoperator}, it is enough to prove $T^*T\in \mathcal M(H_1)$ if and only if $Z_T^*Z_T\in \mathcal M(H_1)$. We know by (\ref{cond2}) of Proposition \ref{eigenvalueforpositive}, that $T^*T\in \mathcal M(H_1)$ if and only if $m(T)^2\in \sigma_p(T^*T)$. Since, $Z_T^*Z_T=T^*T(I+T^*T)^{-1}$ and $m(Z_T^*Z_T)=\frac{m(T)^2}{1+m(T)^2}$, it can be verified that $m(T)^2\in \sigma_p(T^*T)$ if and only if $m(Z_T^*Z_T)\in \sigma_p(Z_T^*Z_T)$.
\end{proof}

\section{Absolutely minimum attaining operators}
In this section, we define absolutely minimum attaining operators and describe the structure of such operators.
%

%
%

\begin{defn}
Let $T\in \mathcal C(H_1,H_2)$ be densely defined. Then $T$ is called absolutely minimum attaining operator if $T|_M:D(T)\cap M\rightarrow H_2$ is minimum attaining for each non-
zero closed subspace $M$ of $H_1$. In other words,  $T$ is absolutely minimum attaining if  there exists $x_0\in   D\big(T|_M\big)$ with $\|x_0\|=1$ such that $\|Tx_0\|=m(T|_M)$.
\end{defn}

Note that  if $T\in \mathcal C(H_1,H_2)$ is densely defined and $M$ is a closed subspace of $H$, then  the restriction operator $T|_M: D(T)\cap M\rightarrow H_2$  is a closed operator and it is densely defined as $D(T|_M)$ is dense in the Hilbert space $\overline{D(T|_M)}$.

We denote the set of all absolutely minimum attaining operators between $H_1$ and $H_2$ by $\mathcal {AM}_{c}(H_1,H_2)$ and in case if $H_1=H_2=H$, this is denoted by $\mathcal {AM}_{c}(H)$.
This class of operators was  introduced and studied in detail by Carvajal and Neves in \cite{carvajalneves2}. The structure of positive absolutely minimum attaining bounded operators is studied in \cite{grs2}.

\begin{prop}\label{closedrangeproperty}
Let $T\in \mathcal {AM}_{c}(H_1,H_2)$. Then $R(T)$ is closed.
\end{prop}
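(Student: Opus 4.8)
The plan is to reduce the statement to the injective case already settled in Proposition \ref{minmattainingclosedrange}. First I would dispose of the trivial case: if $T$ is the zero operator then $R(T)=\{0\}$ is closed, so I may assume $T\neq 0$. Since $T$ is closed, $N(T)$ is a closed subspace of $H_1$, and $T\neq 0$ forces $N(T)\neq H_1$ (otherwise $D(T)\subseteq N(T)$ would give $T\equiv 0$), so $M:=N(T)^\bot$ is a nonzero closed subspace of $H_1$. This is the subspace to which I would apply the absolutely minimum attaining hypothesis.

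The key observation is that the restriction $T|_M$ has domain $D(T)\cap M=D(T)\cap N(T)^\bot=C(T)$, the carrier of $T$, and that $T|_M$ is injective: if $x\in C(T)$ with $Tx=0$ then $x\in N(T)\cap N(T)^\bot=\{0\}$. Next I would identify the ranges. Using the orthogonal decomposition $D(T)=N(T)\oplus^\bot C(T)$ from the preliminaries, every $x\in D(T)$ splits as $x=n+c$ with $n\in N(T)\subseteq D(T)$ and $c\in C(T)$, whence $Tx=Tc$; this yields $R(T)=T(C(T))=R(T|_M)$. Because $T\in\mathcal{AM}(H_1,H_2)$ and $M$ is a nonzero closed subspace, $T|_M$ is minimum attaining, and since it is also injective, Proposition \ref{minmattainingclosedrange} applies to $T|_M$ and gives that $R(T|_M)$ is closed. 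Combining this with $R(T)=R(T|_M)$ completes the argument.

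The step that requires the most care is the legitimacy of invoking Proposition \ref{minmattainingclosedrange} for $T|_M$: one must check that $T|_M$ is a densely defined closed operator in the appropriate Hilbert space $\overline{M}=M$, which is precisely the content of the remark following the definition of $\mathcal{AM}(H_1,H_2)$, where it is noted that the restriction to a closed subspace is closed and densely defined in $\overline{D(T|_M)}$. Beyond this, the only things to verify are the elementary identities $D(T|_M)=C(T)$ and $R(T|_M)=R(T)$, both of which rest on the decomposition $D(T)=N(T)\oplus^\bot C(T)$. I do not anticipate any genuine analytic difficulty here; the proof is essentially a bookkeeping reduction of the general case to the injective situation, where the closedness of the range is already known.
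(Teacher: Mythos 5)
Your proof is correct and follows essentially the same route as the paper: restrict $T$ to $M=N(T)^\bot$, note the restriction is injective and minimum attaining, apply Proposition \ref{minmattainingclosedrange}, and use $R(T)=R(T|_M)$. The only differences are that you spell out the bookkeeping (the zero-operator case, $D(T|_M)=C(T)$, and the identity $R(T)=R(T|_M)$ via $D(T)=N(T)\oplus^\bot C(T)$) which the paper leaves implicit.
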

\begin{proof}
Since $T\in \mathcal {AM}_{c}(H_1,H_2)$, we have $T_0=T|_{N(T)^{\bot}}\in \mathcal {AM}_{c}(N(T)^{\bot},H_2)$ and one-to-one. Hence by Proposition \ref{minmattainingclosedrange}, $R(T_0)$ is closed. It is clear that $R(T)=R(T_0)$.
\end{proof}

\begin{rmk}
 The converse of Proposition \ref{closedrangeproperty} need not be true. Let $P$ be a bounded orthogonal projection with infinite dimensional null space and infinite
 dimensional range space. Then $R(P)$ is closed but $P$ is not absolutely minimum attaining by \cite[Lemma 3.2]{carvajalneves2}.
\end{rmk}

Let $M$ be a closed subspace of $H$ and $T\in \mathcal C(H)$ be densely defined. Then $M$ is said to be invariant under $T$,
if $T(M\cap D(T))\subseteq M$.

Let $P:=P_M$. If $P(D(T))\subseteq D(T)$ and $(I-P)(D(T))\subseteq D(T)$, then
\begin{equation*}
T= \left(
           \begin{array}{cc}
             T_{11} & T_{12} \\
             T_{21} & T_{22} \\
           \end{array}
         \right),
\end{equation*}
where $T_{ij}=P_iTP_j|M_j\; (i,j=1,2)$. Here $P_1=P$ and $P_2=I-P$. It is known that $M$ is invariant under $T$ if and only if $T_{21}=0$. Also, $M$ reduces $T$ if and only if $T_{21}=0=T_{12}$.

\begin{rmk} Let $T\in \mathcal{C}(H_1,H_2)$ be densely defined. Assume that $M$ reduces $T$ and $T_1=T|_M$ and $T_2=T|_{M^\bot}$. Then the following can be easily verified:
\begin{enumerate}
\item $m(T)=\text{min}{\{m(T_1), m(T_2)}\}$
\item $T\in \mathcal M_{c}(H)$ if and only if the operator $T_j$ with $m(T_j)=m(T),\; (j=1 \; \text{or}\; 2)$,  is minimum attaining.

\end{enumerate}

\end{rmk}
\begin{lemma}\label{mpiforblocks}
Let $T\in \mathcal C(H)$ be densely defined. If $M$ reduces $T$, then $T^\dagger|_M=(T|_M)^\dagger$.
\end{lemma}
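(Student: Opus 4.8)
The plan is to build the Moore--Penrose inverse of $T$ directly out of those of the two pieces $T_1:=T|_M$ and $T_2:=T|_{M^\bot}$, and then appeal to the uniqueness asserted in Definition \ref{geninv}. First I would note that, since $M$ reduces $T$, the restriction $T_1$ is a densely defined closed operator in $M$: if $m\in M$ and $x_n\in D(T)$ with $x_n\to m$, then $P_M x_n\to m$ while $P_M x_n\in P_M(D(T))\subseteq D(T)\cap M=D(T_1)$, so $D(T_1)$ is dense in $M$, and closedness of $T_1$ is inherited from $T$. The same holds for $T_2$ in $M^\bot$. Thus $T_1^\dagger\in\mathcal C(M)$ and $T_2^\dagger\in\mathcal C(M^\bot)$ exist, and I can define $S$ on $D(S):=D(T_1^\dagger)\oplus^\bot D(T_2^\dagger)$ by $S(y_1+y_2):=T_1^\dagger y_1+T_2^\dagger y_2$ for $y_1\in D(T_1^\dagger)\subseteq M$ and $y_2\in D(T_2^\dagger)\subseteq M^\bot$.

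The core of the argument is then to verify that $S$ satisfies the three defining identities of $T^\dagger$, the essential input being the orthogonal splittings from Theorem \ref{decompositionofanoperator}. Because $R(T_1)\subseteq M$ and $R(T_2)\subseteq M^\bot$, one gets the orthogonal decompositions $R(T)=R(T_1)\oplus^\bot R(T_2)$, $\overline{R(T)}=\overline{R(T_1)}\oplus^\bot\overline{R(T_2)}$, $N(T)=N(T_1)\oplus^\bot N(T_2)$, together with the matching splittings of the orthogonal complements taken within $M$ and within $M^\bot$ (below, $^{\bot_M}$ denotes the orthogonal complement computed inside $M$). From these it follows immediately that $D(S)=R(T)\oplus^\bot R(T)^\bot=D(T^\dagger)$ and that $N(S)=R(T_1)^{\bot_M}\oplus^\bot R(T_2)^{\bot_{M^\bot}}=R(T)^\bot$, which is property (3).

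The remaining two identities reduce to a componentwise application of the defining properties of $T_1^\dagger$ and $T_2^\dagger$. For $x=x_1+x_2\in D(T)$ with $x_i\in D(T_i)$, property (2) for $T_1,T_2$ gives $STx=P_{N(T_1)^{\bot_M}}x_1+P_{N(T_2)^{\bot_{M^\bot}}}x_2=P_{N(T)^\bot}x$; symmetrically, for $y=y_1+y_2\in D(S)$, property (1) for $T_1,T_2$ yields $TSy=P_{\overline{R(T_1)}}y_1+P_{\overline{R(T_2)}}y_2=P_{\overline{R(T)}}y$. Hence $S$ meets every requirement of Definition \ref{geninv}, and by uniqueness $S=T^\dagger$. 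Reading off the conclusion, since $D(T_1^\dagger)\subseteq M$ and $D(T_2^\dagger)\subseteq M^\bot$ we have $D(T^\dagger)\cap M=D(T_1^\dagger)$, and $T^\dagger y=Sy=T_1^\dagger y$ for every $y\in D(T_1^\dagger)$, so $T^\dagger|_M=T_1^\dagger=(T|_M)^\dagger$.

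I expect the main obstacle to be bookkeeping rather than anything conceptual: one must consistently track whether a given orthogonal complement is taken in $H$, in $M$, or in $M^\bot$, and one must justify that $\overline{R(T_1)\oplus^\bot R(T_2)}=\overline{R(T_1)}\oplus^\bot\overline{R(T_2)}$, which holds because convergence of a sum $r_1^{(n)}+r_2^{(n)}$ with $r_1^{(n)}\in M$, $r_2^{(n)}\in M^\bot$ forces convergence of each orthogonal component separately. Once the decompositions are set up carefully, the verification of the three Moore--Penrose identities is routine.
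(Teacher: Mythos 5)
Your proposal is correct and takes essentially the same route as the paper: form the block-diagonal candidate $S=T_1^\dagger\oplus T_2^\dagger$ relative to $H=M\oplus^\bot M^\bot$ and invoke the uniqueness clause of Definition~\ref{geninv}. If anything, your version is more careful and slightly more general, since the paper tacitly assumes $R(T)$ is closed (so that $S\in\mathcal B(H)$) and only asserts that the Moore--Penrose conditions ``can be verified,'' whereas you carry out that verification, track the orthogonal complements within $M$ and $M^\bot$ explicitly, and never need the closed-range hypothesis.
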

\begin{proof}
Since $M$ is reducing subspace, we have
\begin{equation*}
T=\left(
    \begin{array}{cc}
      T_1 & 0 \\
      0 & T_2 \\
    \end{array}
  \right),
\end{equation*}
where $T_1=T|_M$ and $T_2=T|_{M^\bot}$. Since $R(T)$ is closed, by Theorem \cite[page 287, V.5]{taylorlay}, $R(T_i)$ is closed for $i=1,2$. Let $S=\left(
                                                                                                                                                   \begin{array}{cc}
                                                                                                                                                     T_1^{\dagger} & 0 \\
                                                                                                                                                     0& T_2^{\dagger} \\
                                                                                                                                                   \end{array}
                                                                                                                                                 \right).$
Note that $S\in \mathcal B(H)$ and it can be verified that $S$ satisfies all the conditions of the Moore-Penrose inverse. Since $T^\dagger$ is unique, it follows that $S=T^\dagger$.
This proves the claim.
\end{proof}
\begin{thm}\label{reciprocityproperty1}
Let $T\in \mathcal C(H)$ be densely defined  and have a bounded inverse.  Let $M$ be a subspace of $H$. Then
\begin{enumerate}
\item $m(T|_M)=\dfrac{1}{\|T^{-1}|_{T(M\cap D(T))}\|}$ \label{minmnormrelationsubsp1}
\item if $M$ is closed, then $T(M\cap D(T))$ is closed \label{imgaeofclosedsubsp}
\item If $N$ is any subspace of $H$, then $m(T|_{T^{-1}(N}))=\dfrac{1}{\|T^{-1}|_{N}\|}$.\\
Furthermore, if $N$ is closed and $T\in \mathcal B(H)$, then $T^{-1}(N)$ is closed. \label{minmnormrelationsubsp2}
\item $T\in \mathcal {AM}_{c}(H)$ if and only if $T^{-1}\in \mathcal {AN}(H)$. \label{dualrelationabsminm}
\end{enumerate}
\end{thm}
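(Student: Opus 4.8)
The plan is to push everything through the substitution $y = Tx$, which converts the minimum modulus of a restriction of $T$ into the operator norm of the corresponding restriction of $T^{-1}$; parts (1)--(3) are the computational core and I would establish them first. Since $T$ is injective with $T^{-1}\in\mathcal B(H)$ and $R(T^{-1})=D(T)$, the map $x\mapsto Tx$ is a bijection of $M\cap D(T)$ onto $T(M\cap D(T))$ with inverse $y\mapsto T^{-1}y$; writing $y=Tx$ gives $\|Tx\|/\|x\|=\|y\|/\|T^{-1}y\|$, so passing to infima over $x$ and suprema over $y$ yields (1), namely $m(T|_M)=1/\|T^{-1}|_{T(M\cap D(T))}\|$. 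For (2) I would take $y_n=Tx_n$ with $x_n\in M\cap D(T)$ and $y_n\to y$; boundedness of $T^{-1}$ gives $x_n=T^{-1}y_n\to T^{-1}y=:x$, so $x\in M$ (as $M$ is closed), $x\in D(T)$, and $Tx=TT^{-1}y=y$, whence $y\in T(M\cap D(T))$. Part (3) is (1) applied to $M=T^{-1}(N)$, using $T^{-1}(N)\subseteq D(T)$ and surjectivity of $T$ to get $T(T^{-1}(N))=N$; the final closedness assertion is just that a continuous $T\in\mathcal B(H)$ pulls closed sets back to closed sets.

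The heart of (4) is the attainer correspondence read off from the proof of (1): under $y=Tx$, the operator $T|_M$ attains its minimum at a unit vector $x_0\in M\cap D(T)$ if and only if $T^{-1}|_N$, with $N=T(M\cap D(T))$, attains its norm at $y_0=Tx_0/\|Tx_0\|$. Granting this, the implication $T^{-1}\in\mathcal{AN}(H)\Rightarrow T\in\mathcal{AM}(H)$ is immediate: for any nonzero closed $M$ the subspace $N=T(M\cap D(T))$ is closed by (2), and norm attainment of $T^{-1}|_N$ forces minimum attainment of $T|_M$. This direction needs nothing beyond (1) and (2).

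The main obstacle is the reverse implication, which requires that \emph{every} nonzero closed subspace $N$ arise as $T(M\cap D(T))$ for some closed $M$ -- that is, surjectivity of $M\mapsto T(M\cap D(T))$ onto the closed subspaces. When $T\in\mathcal B(H)$ this is exactly (3): the choice $M=T^{-1}(N)$ is closed and maps onto $N$, so $T\in\mathcal{AM}(H)$ gives minimum attainment of $T|_M$, hence norm attainment of $T^{-1}|_N$, and we conclude. For genuinely unbounded $T$ this surjectivity can fail, because $T^{-1}(N)$ may be dense but not closed in $D(T)$ for the $H$-topology, so that the candidate $M=\overline{T^{-1}(N)}$ satisfies $T(M\cap D(T))\supsetneq N$; the minimum of $T|_M$ is then attained over the strictly larger image, and produces norm attainment for $T^{-1}$ on that larger subspace rather than on $N$.

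I expect this to be the delicate point, and the clean way around it is to route the unbounded reverse implication through compactness rather than through the subspace correspondence. Since $T$ here is bijective we have $T^\dagger=T^{-1}$, and the paper's characterization of injective absolutely minimum attaining operators as those with compact Moore--Penrose inverse gives that $T^{-1}$ is compact. Invoking the elementary observation recorded in the introduction -- that a compact operator restricts to a compact, hence norm attaining, operator on every nonzero closed subspace -- yields $T^{-1}\in\mathcal{AN}(H)$ directly, for \emph{all} closed $N$, bypassing the failure of surjectivity. So my overall strategy is: prove (1)--(3) and the correspondence; deduce $(\Leftarrow)$ and the bounded case of $(\Rightarrow)$ purely from them; and complete the unbounded $(\Rightarrow)$ via compactness of $T^{-1}$ together with norm attainment of compact operators on subspaces.
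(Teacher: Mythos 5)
Your handling of parts (1)--(3), of the implication $T^{-1}\in\mathcal{AN}(H)\Rightarrow T\in\mathcal{AM}(H)$, and of the bounded case of the converse coincides in substance with the paper's own proof: the substitution $y=Tx$, the sequential argument for closedness of $T(M\cap D(T))$, and the attainer correspondence are exactly the mechanisms used there. You have also correctly located the delicate point, namely that for unbounded $T$ the subspace $T^{-1}(N)$ need not be closed, so the correspondence $M\mapsto T(M\cap D(T))$ between closed subspaces need not reach every closed $N$.

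The gap is in your resolution of that point: it is circular relative to the paper's logical structure. To deduce that $T\in\mathcal{AM}(H)$ forces $T^{-1}$ compact you invoke Theorem \ref{characterization1} (the compact--generalized-inverse characterization). But Theorem \ref{characterization1} is proved in the paper from Theorem \ref{structurethm}, and the opening step of the proof of Theorem \ref{structurethm} is ``If $T\in \mathcal{AM}(H)$, then $T^{-1}\in \mathcal{AN}(H)$ by Theorem \ref{reciprocityproperty1}'' --- precisely the implication you are trying to establish. So your proposed route (reciprocity $\Rightarrow$ structure theorem $\Rightarrow$ characterization $\Rightarrow$ reciprocity) is a loop, and the unbounded forward direction remains unproved unless you supply an independent argument that $T\in\mathcal{AM}(H)$ implies compactness of $T^{-1}$; that is essentially the structure theorem itself and is not easier than the statement at hand. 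What the paper actually does is stay inside the minimum-modulus framework: given a closed $N$, it takes $M:=T^{-1}(N)$ as it is (possibly non-closed), applies the minimum attaining property to $T_M:=T|_M$ to get a unit vector with $T_M^*T_Mx_0=m(T_M)^2x_0$, writes $x_0=T^{-1}y_0$ with $y_0\in N=R(T_M)\subseteq N(T_M^*)^{\bot}$, and then uses $\|T_M^*y_0\|\geq \gamma(T_M^*)\|y_0\|=\gamma(T_M)\|y_0\|\geq m(T_M)\|y_0\|$ to conclude $\|T^{-1}y_0\|\geq \|y_0\|/m(T_M)=\|T^{-1}|_N\|\,\|y_0\|$, so $y_0/\|y_0\|$ is a norm attainer for $T^{-1}|_N$. (One may ask whether the paper's own appeal to the $\mathcal{AM}$ hypothesis for this possibly non-closed $M$ is fully justified --- your instinct that the difficulty lives exactly here is sound --- but the paper's fix is this direct adjoint/reduced-minimum-modulus argument, not compactness, and in particular it is not circular.)
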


\begin{proof}
Proof of (\ref{minmnormrelationsubsp1}): First, note that as $T$ is one-to-one, we have $D(T)=C(T)$ and $m(T)=\gamma(T)$.
By definition, \begin{align*}
m(T|_M)&=\inf{\Big\{\dfrac{\|Tx\|}{\|x\|}:x\in M\cap D(T),\; x\neq 0}\Big\}\\
       &=\dfrac{1}{\sup{\Big\{\dfrac{\|x\|}{\|Tx\|}:x\in M\cap C(T),\; x\neq 0}\Big\}}\\
       &=\dfrac{1}{\sup{\Big\{\dfrac{\|T^{-1}y\|}{\|y\|}:y=Tx\in T(M\cap D(T)),\; x\neq 0}\Big\}}\\
       &=\dfrac{1}{\|T^{-1}|_{T(M\cap D(T))}\|}.
\end{align*}

Proof of (\ref{imgaeofclosedsubsp}):
 Let $N:=T(M\cap D(T))$ and  let $y\in \overline{N} $. Let $(x_n)\subset M\cap D(T)$ be such that
 $y=\displaystyle \lim_{n\rightarrow \infty }Tx_n$. Since $T^{-1}\in \mathcal B(H)$,
 it follows that $\displaystyle\lim_{n\rightarrow \infty}x_n=T^{-1}y$. Since $M$ is closed, we can conclude that
$T^{-1}y\in M\cap D(T)$. Since $R(T)$ is closed, $y\in R(T)$. Hence $y=T(T^{-1}y)\in T(M\cap D(T))$.

Proof of (\ref{minmnormrelationsubsp2}): This goes along the similar lines of (\ref{minmnormrelationsubsp1}) and (\ref{imgaeofclosedsubsp}).

Proof of (\ref{dualrelationabsminm}):
 If $M=H$, then by Theorem \ref{equivalencegeninvcondition}, we have that $T\in \mathcal M_{c}(H)$ if and only if $T^{-1}\in \mathcal N(H)$. Hence assume that ${\{0}\}\neq M\subset H$. Let $T^{-1}\in \mathcal {AN}(H)$.
  Let  $X=T(M\cap D(T))$ and $R_X=T^{-1}|_{X}$. By (\ref{imgaeofclosedsubsp}), $X$ is closed. Since  $R_X\in \mathcal N(X,H)$, there exists $y_0\in S_X$, such that $\|R_Xy_0\|=\|R_X\|$. This is equivalent to the fact that $R_X^*R_Xy_0=\|R_X\|^2y_0$. Let $y_0=Tx_0$ for some $x_0\in M\cap D(T)$, we get  $R_X^*x_0=\|R_X\|^2Tx_0$. Therefore $\|Tx_0\|=\dfrac{\|R_X^*x_0\|}{\|R_X\|^2}\leq \dfrac{\|x_0\|}{\|R_X\|}=m(T|_M)\, \|x_0\|$ by (\ref{minmnormrelationsubsp1}). Writing $z_0=\dfrac{x_0}{\|x_0\|}$, we get that $\|Tz_0\|\leq m(T|_M)$. But the other inequality holds clearly. Hence $T|_M\in \mathcal M_{c}(M,H)$.

  Conversely, assume that $T\in \mathcal {AM}_{c}(H)$. Let $N$ be a closed subspace of $H$ and let $M:=T^{-1}(N)\subseteq C(T)$.  Since $T\in \mathcal {AM}_{c}(H)$, we have $T_M:=T|_M\in \mathcal{M}_{c}(M,H)$. It can be easily verified that $T_M$ is closed, since $T$ is closed. Since $D(T)\cap M$ is dense in $\overline{D(T)\cap M}$, $T_M$ is densely defined operator. Hence $T_M^*:D(T_M^*)\rightarrow \overline{M}$  exists. By Theorem \ref{equivalentwithgramoperator} and Proposition \ref{eigenvalueforpositive},  there exists  $x_0\in S_{D(T_M^*T_M)}$ such that
\begin{equation}\label{restrictedeigenvalue}
T_M^*T_Mx_0=m(T_M)^2x_0.
\end{equation}
As $T$ is bounded below, $T_M$ is bounded below and hence $m(T_M)>0$.   Let $x_0=T^{-1 }y_0$  for some $y_0\in N$. Then  Equation \ref{restrictedeigenvalue} takes the form:
\begin{equation}\label{restrictedeigenvalue2}
T_M^*y_0=m(T_M)^2T^{-1}y_0.
\end{equation}
First, observe that $R(T_M)=T_M(M\cap D(T))=T(D(T)\cap M)=T(M)=T(T^{-1}(N))=N$.  Hence  $y_0\in N=R(T_M)=N((T_M)^*)^{\bot}$. Taking norm both sides of Equation \ref{restrictedeigenvalue2},  we get
\begin{equation*}
\|T^{-1}{y_0\|}=\dfrac{\|T_M^{*}y_0\|}{m(T_M)^2}\geq \dfrac{\gamma(T_M^*)\|y_0\|}{m(T_M)^2}=\dfrac{\gamma(T_M)\|y_0\|}{m(T_M)^2}\geq \dfrac{m(T_M)\|y_0\|}{m(T_M)^2}=\dfrac{\|y_0\|}{m(T_M)}.
\end{equation*}
Hence $z_0=\dfrac{y_0}{\|y_0\|}\in S_{N}$ and $\|T^{-1}(z_0)\|=\|T^{-1}|_N\|$.
\end{proof}

\begin{cor}
 Let $T\in \mathcal B(H)$ be such that $T^{-1}\in \mathcal B(H)$. Then $T\in \mathcal {AM}_{c}(H)$ if and only if $T^{-1}\in \mathcal{AN}(H)$.
\end{cor}

\begin{thm}\label{structurethm}
Let $T\in \mathcal {AM}_{c}(H)$, positive and not bounded.  Assume that $T$ is one-to-one. Then there exists an unbounded (increasing) sequence ${\{\lambda_n}\}$ of  eigenvalues of $T$ with corresponding eigenvectors ${\{\phi_n}\}$ such that
\begin{enumerate}
\item
\begin{align*}
D(T)&= {\{x\in H: \displaystyle \sum_{n=1}^{\infty} \lambda_n^2\,|\langle x,\phi_n\rangle|^2<\infty}\} \; and\\
Tx&=\sum_{n=1}^\infty\lambda_n\langle x,\phi_n\rangle \phi_n, \; \text{for all}\; x\in D(T).
\end{align*}
The series in the above representation converges in the  strong operator topology. Moreover, $T^{-1}$ is compact.  \label{repn}
\item  $\sigma(T)={\{\lambda_n:n\in \mathbb N}\}$=$\sigma_{p}(T)$  \label{spectrumcalculation}
\item if $\mu \in \sigma_p(T)$,  then $\mu$ is an eigenvalue with  finite multiplicity \label{finitemultiplicity}
\item  $\overline{\text{span}}{\{\phi_n:n\in \mathbb N}\}=H$. \label{ONBeigenvectors}
\end{enumerate}
\end{thm}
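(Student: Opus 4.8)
The plan is to reduce the unbounded problem to the bounded structure theorem (Theorem \ref{SPthm}) by passing to the inverse, exploiting the reciprocity between $\mathcal{AM}$ and $\mathcal{AN}$ recorded in Theorem \ref{reciprocityproperty1}.

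First I would produce a bounded positive inverse. Since $T$ is positive and one-to-one, $N(T)=\{0\}$, so $\overline{R(T)}=N(T^*)^\bot=N(T)^\bot=H$; combined with Proposition \ref{closedrangeproperty}, which gives that $R(T)$ is closed, this forces $R(T)=H$. Thus $T\colon D(T)\to H$ is a closed bijection, and by the closed graph theorem $T^{-1}\in\mathcal B(H)$. Writing $y=Tx$ shows $\langle T^{-1}y,y\rangle=\langle x,Tx\rangle\ge 0$, so $T^{-1}$ is positive. Moreover $R(T^{-1})=D(T)$ is a proper dense (hence non-closed) subspace of $H$ because $T$ is not bounded, so $T^{-1}$ is not bounded below; therefore $m(T^{-1})=0$ and $0\in\sigma(T^{-1})$.

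Next I would apply the reciprocity of Theorem \ref{reciprocityproperty1}(\ref{dualrelationabsminm}): since $T$ has a bounded inverse and $T\in\mathcal{AM}(H)$, we obtain $T^{-1}\in\mathcal{AN}(H)$. As $T^{-1}$ is a bounded, positive, absolutely norm attaining operator, Theorem \ref{SPthm} gives an orthonormal basis $\{v_\alpha\}$ of $H$ of eigenvectors with $T^{-1}v_\alpha=\beta_\alpha v_\alpha$, $\beta_\alpha\ge 0$. Injectivity of $T^{-1}$ forces every $\beta_\alpha>0$. Since $0\in\sigma(T^{-1})=\overline{\{\beta_\alpha\}}$ but $0$ is not an eigenvalue, $0$ is a limit point of the spectrum, and by Theorem \ref{SPthm} it is the only one; because the only possible eigenvalue of infinite multiplicity would have to coincide with this limit point $0$, every $\beta_\alpha$ has finite multiplicity. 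Together with the countability of the distinct eigenvalues, the eigenvalues listed with multiplicity form a sequence $\beta_1\ge\beta_2\ge\cdots>0$ with $\beta_n\to 0$, and the corresponding $\{v_n\}$ is a countable orthonormal basis of $H$.

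Finally I would set $\lambda_n:=\beta_n^{-1}$, giving an unbounded increasing sequence with $Tv_n=\lambda_n v_n$. Then (\ref{ONBeigenvectors}) is immediate since $\{v_n\}$ is an orthonormal basis; (\ref{finitemultiplicity}) holds because each eigenvalue of $T$ is some $\lambda_n$ whose eigenspace equals the finite-dimensional $\beta_n$-eigenspace of $T^{-1}$; and (\ref{spectrumcalculation}) follows from $\sigma(T)\setminus\{0\}=\{\mu^{-1}:\mu\in\sigma(T^{-1})\setminus\{0\}\}$ together with $0\notin\sigma(T)$. For (\ref{repn}), from $T^{-1}=\sum_n\beta_n\,v_n\otimes v_n$ with $\beta_n\to 0$ the partial sums are finite rank and converge in operator norm, so $T^{-1}$ is compact, and inverting the diagonalized $T^{-1}$ yields $D(T)=\{x:\sum_n\lambda_n^2|\langle x,v_n\rangle|^2<\infty\}$ with $Tx=\sum_n\lambda_n\langle x,v_n\rangle v_n$ converging pointwise. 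The step demanding the most care is this last domain identification: one must verify that inverting the bounded diagonal operator reproduces $T$ exactly, i.e. that the domain obtained as $R(T^{-1})$ coincides with the stated series domain, so that no proper extension or restriction is introduced; here closedness of $T$ and uniqueness of the inverse are the tools that pin down the domain.
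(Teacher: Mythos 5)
Your proposal is correct and follows essentially the same route as the paper's own proof: produce the bounded positive inverse, invoke Theorem \ref{reciprocityproperty1}(\ref{dualrelationabsminm}) to get $T^{-1}\in\mathcal{AN}(H)$, apply Theorem \ref{SPthm}, and then invert the resulting diagonal representation to obtain the series description of $D(T)$ and $T$, with compactness of $T^{-1}$ coming from the eigenvalues decreasing to zero with finite multiplicities. The only notable local difference is that you rule out an infinite-multiplicity eigenvalue and a nonzero limit point by observing that $0\in\sigma(T^{-1})$ is a limit point of the spectrum which is not an eigenvalue (using part (4) of Theorem \ref{SPthm}), which is arguably cleaner than the paper's two contradiction arguments that a bounded spectrum would force $T$ to be bounded via \cite{knr}.
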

\begin{proof}

Proof of (\ref{repn}):
First note that as $T$ is one-to-one and $R(T)$ is closed, $T$ is bounded below. Since $T\geq 0$, $T^{-1}$ exists and bounded.  If $T\in \mathcal {AM}_{c}(H)$, then $T^{-1}\in \mathcal{AN}(H)$ by Theorem \ref{reciprocityproperty1}. Hence by \cite[Theorem 2.5]{rameshvenku}, there exists unique triple $(K,F,\alpha)$, where $K\in \mathcal K(H)$ is positive, $F\in \mathcal F(H)$ positive and $\alpha \geq 0$ such that $KF=0=FK,\; F\leq \alpha I $ and $T^{-1}=\alpha I+K-F$. If $\alpha=0$, then $F=0$ and hence $T^{-1}=K\in \mathcal K(H)$. Next, assume that $\alpha>0$. In this case, $R(T^{-1})=D(T)$ is closed by \cite[Proposition 2.8]{rameshvenku} is closed. Since $T$ is densely defined, we must have that $D(T)=H$. By the closed graph theorem $T$ must be bounded, a contradiction. Hence $\alpha>0$ is not possible. This implies that $\alpha=0$ and hence $T^{-1}\in \mathcal K(H)$.

By the spectral theorem, there exists increasing sequence $(\mu_n)$ of positive eigenvalues of $T^{-1}$ with corresponding eigenvectors ${\{\phi_n:n\in \mathbb N}\}$ such that
\begin{equation}\label{inversecptrepn}
T^{-1}y=\displaystyle \sum_{n=1}^{\infty}\mu_n\langle y,\phi_n\rangle \phi_n,\; \text{for all}\; y\in H.
\end{equation}
The sequence $\mu_n\rightarrow 0$ as $n\rightarrow \infty$. More over, the above series converges to $T^{-1}$ in the operator norm of $\mathcal B(H)$. We can also observe that the sequence $(\mu_n)$ is an infinite sequence. Otherwise, $T^{-1}$ is a finite rank operator and $\sigma(T)$ is bounded. By \cite{knr} this implies that $T$ is bounded which leads to a contradiction. Also, since $T^{-1}$ is compact, $\sigma(T^{-1})={\{\mu_n:n\in \mathbb N}\}=\sigma_p(T)$ and each $\mu_n$ has finite multiplicity. Also, $\mu_{n+1}\leq \mu_{n}$ for each $n\in \mathbb N$.

%

Let $\lambda_n:=\mu_n^{-1}$ for all $n\in \mathbb N$.
As $T^{-1}$ is compact, by \cite[Theorem 6.1, page 214]{ggk1}, it follows that
\begin{align*}
D(T)&={\{x\in H: \displaystyle \sum_{n=1}^{\infty} \lambda_n^2\,|\langle x,\phi_n\rangle|^2<\infty}\} \; and \\
Tx&=\displaystyle \sum_{n=1}^{\infty} \lambda_n\langle x,\phi_n\rangle \phi_n, \; \text{for all}\; x\in D(T).
\end{align*}

On the other hand, if $T^{-1}$  is compact, by Theorem \ref{reciprocityproperty1}, $T\in \mathcal {AM}_{c}(H)$.

Proof of  statement (\ref{spectrumcalculation}) is clear.       The statement  (\ref{finitemultiplicity}) is proved in (\ref{repn}).

Proof of (\ref{ONBeigenvectors}): Since $T^{-1}$ is compact, $R(T^{-1})=D(T)$ is separable and by the representation above, we have that $H=\overline{D(T)}=\overline{R(T^{-1})}=\overline{\text{span}}{\{v_n:n\in \mathbb N}\}$.
 \end{proof}
 \begin{rmk}
  If $T\in \mathcal B(H)$, then the conclusion  (\ref{repn}) of Theorem \ref{structurethm} is not  true. The unboundedness of the operator is used to get the inverse to be compact.
 \end{rmk}
\begin{thm}\label{characterization1}
Let $T\in \mathcal C(H)$ be densely defined  and one-to-one but not bounded. Then
\begin{enumerate}
\item $T\in \mathcal {AM}_{c}(H)$ if and only if $T^*T\in \mathcal  {AM}_{c}(H)$\label{gram}
\item $T\in \mathcal {AM}_{c}(H)$ if and only if $T^{\dagger}\in \mathcal  {K}(H)$. \label{geninvcompact}
\end{enumerate}
\end{thm}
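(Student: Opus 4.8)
The plan is to establish (\ref{geninvcompact}) first and then derive (\ref{gram}) from it, since compactness of the Moore--Penrose inverse is the natural bridge that survives passage to $T^*T$. For (\ref{geninvcompact}) the first step is to reduce to a positive operator. Because $D(T)=D(|T|)$ and $\|Tx\|=\||T|x\|$ for every $x\in D(T)$, for any nonzero closed subspace $M$ we have $D(T|_M)=D(|T||_M)$, $m(T|_M)=m(|T||_M)$, and $T|_M$ attains its minimum modulus at a unit vector $x_0$ exactly when $|T||_M$ does. Hence $T\in\mathcal{AM}(H)$ if and only if $|T|\in\mathcal{AM}(H)$. Now $|T|$ is positive, one-to-one (since $N(|T|)=N(T)=\{0\}$) and unbounded (an unbounded $T$ forces $|T|$ unbounded, by $\|Tx\|=\||T|x\|$), so Theorem \ref{structurethm} applies and gives $|T|\in\mathcal{AM}(H)$ if and only if $|T|^{-1}$ is compact.

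The second step is to convert ``$|T|^{-1}$ compact'' into ``$T^\dagger$ compact.'' When $T\in\mathcal{AM}(H)$, Proposition \ref{closedrangeproperty} gives that $R(T)$ is closed, so $T^\dagger$ is bounded by Proposition \ref{equivalentclosedrange}, $R(|T|)$ is closed, and since $\overline{R(|T|)}=\overline{R(T^*)}=N(T)^\bot=H$ the operator $|T|$ is a bijection with $|T|^{-1}=|T|^\dagger$. By Proposition \ref{mprelations}(\ref{mprelations2}) we have $|T|^\dagger=|(T^\dagger)^*|$. Using the polar decomposition, a bounded operator $A$ is compact if and only if $|A|$ is compact, and $A$ is compact if and only if $A^*$ is; applying this with $A=(T^\dagger)^*$ shows that $|T|^\dagger$ is compact if and only if $T^\dagger$ is compact. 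Chaining the equivalences yields $T\in\mathcal{AM}(H)\iff |T|\in\mathcal{AM}(H)\iff |T|^{-1}\ \text{compact}\iff T^\dagger\in\mathcal{K}(H)$. In the converse direction one starts from $T^\dagger\in\mathcal{K}(H)$, notes $R(T)$ is closed so that $R(|T|)=H$, deduces $|T|^{-1}=|T|^\dagger$ is compact, and invokes the ``if $T^{-1}$ is compact then $T\in\mathcal{AM}$'' half of Theorem \ref{structurethm} applied to $|T|$.

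For (\ref{gram}) the plan is to apply (\ref{geninvcompact}) twice. The operator $T^*T$ is one-to-one (since $N(T^*T)=N(T)=\{0\}$) and unbounded (as $T$ is), so (\ref{geninvcompact}) gives $T^*T\in\mathcal{AM}(H)\iff (T^*T)^\dagger\in\mathcal{K}(H)$, while $T\in\mathcal{AM}(H)\iff T^\dagger\in\mathcal{K}(H)$. It then remains to show that $(T^*T)^\dagger$ is compact if and only if $T^\dagger$ is. Here I use the identity $(T^*T)^\dagger=T^\dagger T^{*\dagger}=T^\dagger (T^\dagger)^*$, which follows from the listed properties of the Moore--Penrose inverse, namely $(T^*T)^\dagger=T^\dagger T^{*\dagger}$ together with $T^{*\dagger}=T^{\dagger *}$; since $AA^*$ is compact if and only if $A$ is compact, taking $A=T^\dagger$ finishes the equivalence and hence (\ref{gram}).

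The main obstacle is not any single computation but keeping the reduction to $|T|$ honest: one must check that the minimum-attaining property transfers to \emph{every} restriction $T|_M$ simultaneously, which is precisely why the pointwise identity $\|Tx\|=\||T|x\|$, rather than the spectral argument behind Theorem \ref{equivalentwithgramoperator}, is the right tool (the spectral route only relates $(T|_M)^*(T|_M)$ to $T|_M$, not $(T^*T)|_M$ to $T|_M$). One must also confirm that $|T|$ is genuinely a bounded bijection, so that $|T|^{-1}$ and $|T|^\dagger$ coincide and Theorem \ref{structurethm} is applicable. I expect the bookkeeping of closed range and of the density $\overline{R(|T|)}=H$ to be the one place where care is needed; everything else reduces to the standard compactness-stability facts for the maps $A\mapsto A^*$, $A\mapsto|A|$, and $A\mapsto AA^*$.
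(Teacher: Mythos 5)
Your proof is correct and takes essentially the same route as the paper: both arguments reduce to $|T|$ via the pointwise identity $\|Tx\|=\||T|x\|$, invoke Theorem \ref{structurethm} to identify $|T|\in\mathcal{AM}(H)$ with compactness of $|T|^{-1}$, and use Proposition \ref{mprelations} together with the standard compactness-stability facts to pass between $|T|^{\dagger}$ and $T^{\dagger}$. The only organizational difference is that you prove (\ref{geninvcompact}) first and deduce (\ref{gram}) from it via the identity $(T^*T)^{\dagger}=T^{\dagger}(T^{\dagger})^{*}$, whereas the paper proves (\ref{gram}) directly by applying Theorem \ref{structurethm} to $T^*T$ (through $(T^*T)^{-1}=|T|^{-2}$) and then obtains (\ref{geninvcompact}).
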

\begin{proof}

Proof of (\ref{gram}):  If $T\in \mathcal {AM}_{c}(H)$, then $R(T)$ is closed. As $T$ is one-to-one,  $T$ is bounded below. Also, since $|T|$ and $T^*T$ are bounded below and positive,  both have bounded inverse. Hence
\begin{align*}
T\in \mathcal {AM}_{c}(H) \Leftrightarrow |T|\in  \mathcal {AM}_{c}(H) &\Leftrightarrow |T|^{-1}\in  \mathcal{K}(H)  \; ( \text{by Theorem \ref{structurethm}})\\
                                     &\Leftrightarrow |T|^{-2}\in  \mathcal{K}(H)\\
                                     &\Leftrightarrow (T^*T)^{-1}\in  \mathcal{K}(H)\\
                                      &\Leftrightarrow T^*T\in  \mathcal {AM}_{c}(H)  \; ( \text{by Theorem \ref{structurethm}}).
\end{align*}
Proof of (\ref{geninvcompact}):
By  (\ref{gram}),
\begin{align*}
T\in \mathcal {AM}_{c}(H)& \Leftrightarrow  |T|\in \mathcal {AM}_{c}(H)
                                    &\Leftrightarrow  |T|^{-1} \in \mathcal{K}(H)  \; ( \text{by Theorem \ref{structurethm}})\\
                                    &\Rightarrow  |T|^{\dagger}\in \mathcal{K}(H).
                                     \end{align*}

On the other hand, if $T^{\dagger}\in \mathcal K(H)$, then $R(T)$  is closed. As $T$ is one-to-one,  $T$ must be bounded below. This implies that $|T|^{-1}\in \mathcal B(H)$.
Thus,

\begin{align*}
T^{\dagger} \in \mathcal K(H) \Leftrightarrow (T^{*})^{\dagger}\in \mathcal K(H)
                                               &\Leftrightarrow |(T^*)^{\dagger}|\in \mathcal K(H)\\
                                              &\Leftrightarrow |T|^{\dagger}=|T|^{-1}\in \mathcal K(H) \; (\text{by (\ref{mprelations2}) of Proposition \ref{mprelations} })\\
                                              &\Leftrightarrow |T|\in \mathcal{AM}_{c}(H)\\
                                              &\Leftrightarrow T\in \mathcal {AM}_{c}(H). \qedhere
\end{align*}
\end{proof}

\begin{thm}\label{dualwithadjoint}
Let $T\in \mathcal C(H)$ be densely defined unbounded and have a bounded inverse. Then $T\in \mathcal {AM}_{c}(H)$ if and only if $T^*\in \mathcal  {AM}_{c}(H)$.
\end{thm}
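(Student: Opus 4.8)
The plan is to reduce the statement to the compactness characterization of Theorem \ref{characterization1} together with the fact (Schauder's theorem) that compactness of a bounded operator is preserved under taking adjoints.

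First I would unpack the hypothesis that $T$ has a bounded inverse. This means $T$ is one-to-one with $R(T)=H$, so in particular $N(T)=\{0\}$ and $R(T)$ is closed; consequently the Moore--Penrose inverse coincides with the ordinary inverse, $T^{\dagger}=T^{-1}\in\mathcal B(H)$. Since $T$ is one-to-one and unbounded, Theorem \ref{characterization1}(\ref{geninvcompact}) applies directly and gives
$$T\in\mathcal{AM}(H)\iff T^{\dagger}\in\mathcal K(H)\iff T^{-1}\in\mathcal K(H).$$

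Next I would establish the analogous facts for $T^{*}$. Because $T$ is densely defined and closed, $T^{**}=T$; if $T^{*}$ were bounded (hence everywhere defined) then $T=T^{**}$ would be bounded by the closed graph theorem, contradicting the hypothesis, so $T^{*}$ is unbounded. Moreover $0\in\rho(T)$ forces $0\in\rho(T^{*})$ with $(T^{*})^{-1}=(T^{-1})^{*}$; in particular $T^{*}$ is one-to-one and has the bounded inverse $(T^{-1})^{*}$, whence $(T^{*})^{\dagger}=(T^{*})^{-1}=(T^{-1})^{*}$. Applying Theorem \ref{characterization1}(\ref{geninvcompact}) to $T^{*}$ then yields
$$T^{*}\in\mathcal{AM}(H)\iff (T^{*})^{\dagger}\in\mathcal K(H)\iff (T^{-1})^{*}\in\mathcal K(H).$$

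Finally I would invoke Schauder's theorem: a bounded operator is compact if and only if its adjoint is compact, so $T^{-1}\in\mathcal K(H)$ if and only if $(T^{-1})^{*}\in\mathcal K(H)$. Chaining the three displayed equivalences gives $T\in\mathcal{AM}(H)\iff T^{*}\in\mathcal{AM}(H)$, as required. The only step requiring genuine care---and the one I expect to be the main obstacle to write down cleanly---is the resolvent identity $(T^{*})^{-1}=(T^{-1})^{*}$ for the unbounded operator $T$, that is, verifying $0\in\rho(T)\Rightarrow 0\in\rho(T^{*})$ together with the fact that the inverse of the adjoint is the adjoint of the inverse; everything else is a routine assembly of the cited results.
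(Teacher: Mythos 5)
Your proof is correct and follows essentially the same route as the paper's: both arguments reduce the statement to Theorem \ref{characterization1}(\ref{geninvcompact}) applied to $T$ and to $T^*$, linked through the identity $(T^*)^{-1}=(T^{-1})^*$ and Schauder's theorem on compactness of adjoints. You simply make explicit the details the paper leaves implicit, namely the unboundedness of $T^*$ and the verification that $(T^*)^{\dagger}=(T^{-1})^*$.
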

\begin{proof}
First observe that both $T$ and $T^*$ are bounded below. We know that $T\in \mathcal {AM}_{c}(H)$ if and only if $T^{-1}\in \mathcal K(H)$. This is true if and only if $(T^*)^{-1}\in \mathcal K(H)$. Now, by \ref{geninvcompact} of Theorem \ref{characterization1}, this is equivalent to the fact that $T^*\in \mathcal {AM}_{c}(H)$.
\end{proof}
\begin{defn}[Hyper invariant subspace]Let $T\in \mathcal C(H)$ be densely defined and $M$ be a closed subspace of $H$. Then $M$ is said to be hyperinvariant subspace of $T$ if $M$ is invariant under every $S\in \mathcal B(H)$ such that $ST\subseteq TS$.
\end{defn}
\begin{thm}(Lomonosov )\label{Lomonosovthm}\cite{radjavirosenthal}
Every operator that commutes with a non-zero compact operator and is not a multiple of the identity has a non-trivial hyperinvariant subspace.
\end{thm}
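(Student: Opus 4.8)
The plan is to reproduce the classical Lomonosov argument based on Schauder's fixed point theorem. Write $A$ for the given operator, assumed not to be a scalar multiple of the identity, and let $K \in \mathcal{B}(H)$ be a non-zero compact operator commuting with $A$. Consider the commutant $\mathcal{A} := \{A\}' = \{S \in \mathcal{B}(H) : SA = AS\}$, a unital subalgebra of $\mathcal{B}(H)$ that contains $K$. Arguing by contradiction, I would assume that $A$ has no non-trivial hyperinvariant subspace. Then for every $x \ne 0$ the closed subspace $\overline{\mathcal{A}x}$ is invariant under each $S \in \mathcal{A}$, hence hyperinvariant for $A$ and non-zero; by assumption it must equal $H$. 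Thus $\mathcal{A}$ is \emph{transitive}: $\overline{\mathcal{A}x} = H$ for all $x \ne 0$.

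The core step is a fixed-point lemma producing $S \in \mathcal{A}$ for which $SK$ has $1$ as an eigenvalue. Normalizing $\|K\| = 1$, I would pick $x_0$ with $\|K x_0\| > 1$, which forces $\|x_0\| > 1$, and set $B := \{x : \|x - x_0\| \le 1\}$. Then $0 \notin B$, and since $\|Kx\| \ge \|Kx_0\| - 1 > 0$ on $B$, also $0 \notin \overline{K(B)}$, a compact set. Transitivity lets me cover $\overline{K(B)}$ by the open sets $\{y : \|S y - x_0\| < 1\}$, $S \in \mathcal{A}$; extracting a finite subcover $S_1, \dots, S_n$ and a subordinate partition of unity $f_1, \dots, f_n$, I define $\psi(x) := \sum_{i=1}^n f_i(Kx)\, S_i K x$. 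One checks that $\psi$ is continuous and maps $B$ into $B$, since each $\psi(x)$ is a convex combination of points of $B$. As $\psi(B)$ is relatively compact, Mazur's theorem makes $C := \overline{\mathrm{conv}}\,\psi(B) \subseteq B$ compact and convex with $\psi(C) \subseteq C$, so Schauder's theorem gives $x^* = \psi(x^*)$ with $x^* \ne 0$. Setting $S := \sum_i f_i(Kx^*) S_i \in \mathcal{A}$, this reads $SK x^* = x^*$.

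It then remains to convert the eigenvalue $1$ of $SK$ into the desired subspace. The eigenspace $E := \ker(I - SK)$ is non-zero and, since $SK$ is compact and $1 \ne 0$, finite-dimensional by Riesz theory; moreover $E$ is invariant under $A$ because $A$ commutes with $SK$. Hence $A|_E$ is an operator on a non-zero finite-dimensional complex space and has an eigenvalue $\lambda$, so $N := \ker(A - \lambda I)$ is non-zero. Since $N$ is invariant under every operator commuting with $A$, it is hyperinvariant for $A$; and $N \ne H$, for otherwise $A = \lambda I$ would be scalar. This produces a non-trivial hyperinvariant subspace, contradicting transitivity and completing the argument.

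The step I expect to be the main obstacle is the fixed-point construction: ensuring that $\psi$ sends a genuinely \emph{compact} convex set into itself (this is where compactness of $K$ and Mazur's theorem are essential) and verifying continuity of $x \mapsto f_i(Kx)\, S_i K x$, so that Schauder's theorem applies without gaps. By contrast, the passage from the eigenvalue $1$ of $SK$ to the hyperinvariant kernel $\ker(A - \lambda I)$ is routine finite-dimensional linear algebra together with bookkeeping about commutation.
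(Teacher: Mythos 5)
Your proposal is correct, but note that the paper gives no proof of this statement at all: it is Lomonosov's classical theorem, quoted verbatim with a citation to Radjavi--Rosenthal and then used as a black box to deduce that unbounded $\mathcal{AM}$-operators with bounded inverse have hyperinvariant subspaces. What you have written is a faithful and essentially complete reproduction of the standard argument in that cited reference --- transitivity of the commutant under the contradiction hypothesis, the Schauder fixed-point construction yielding $SKx^*=x^*$ with $S$ in the commutant, and the finite-dimensional Riesz/eigenvalue step producing $\ker(A-\lambda I)$ as the hyperinvariant subspace --- so it matches the intended (cited) proof rather than offering a different route.
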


Using Theorem \ref{Lomonosovthm} we will prove that every $\mathcal{AM}$-operator has a non trivial hyperinvariant subspace.
\begin{thm}
Let $T\in \mathcal {AM}_{c}(H)$, unbounded and  $T^{-1}\in \mathcal B(H)$. Then $T$ has a non trivial hyper invariant subspace.
\end{thm}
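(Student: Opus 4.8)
The plan is to transfer the problem from the unbounded operator $T$ to its bounded inverse $T^{-1}$ and then invoke Lomonosov's theorem (Theorem \ref{Lomonosovthm}). Since $T^{-1}\in\mathcal B(H)$, the operator $T$ maps $D(T)$ bijectively onto $H$, so $N(T)=\{0\}$ and $R(T)=H$; in particular $T$ is one-to-one and $T^{\dagger}=T^{-1}$. Because $T\in\mathcal{AM}(H)$ is densely defined, unbounded and one-to-one, part (\ref{geninvcompact}) of Theorem \ref{characterization1} applies and yields $T^{\dagger}=T^{-1}\in\mathcal K(H)$. Thus $T^{-1}$ is a non-zero compact operator, and the task is reduced to locating a hyperinvariant subspace for it and matching it back to $T$.

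The heart of the argument is to show that the bounded commutant of $T$ coincides with that of $T^{-1}$: for $S\in\mathcal B(H)$ one has $ST\subseteq TS$ if and only if $ST^{-1}=T^{-1}S$. For the forward direction I would take $y\in H$, set $x=T^{-1}y\in D(T)$, and use the inclusion $ST\subseteq TS$ to conclude $Sx\in D(T)$ and $STx=TSx$, i.e. $Sy=T(Sx)$, whence $Sx=T^{-1}Sy$; since $Sx=ST^{-1}y$ this gives $ST^{-1}=T^{-1}S$. For the converse, starting from $ST^{-1}=T^{-1}S$ and $x\in D(T)$, applying the relation to $y=Tx$ produces $Sx=T^{-1}STx\in R(T^{-1})=D(T)$ together with $TSx=STx$, which is exactly $ST\subseteq TS$. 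Consequently a closed subspace $M$ is invariant under every $S\in\mathcal B(H)$ with $ST\subseteq TS$ precisely when it is invariant under every $S\in\mathcal B(H)$ commuting with the bounded operator $T^{-1}$; equivalently, $M$ is hyperinvariant for $T$ if and only if it is hyperinvariant for $T^{-1}$.

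It then remains to produce a non-trivial hyperinvariant subspace for $T^{-1}$. Here $T^{-1}$ commutes with the non-zero compact operator $T^{-1}$ and is not a scalar multiple of the identity, since $H$ is infinite dimensional and a compact multiple of $I$ would have to be $0$, contradicting the invertibility of $T^{-1}$. Hence Theorem \ref{Lomonosovthm} furnishes a non-trivial closed subspace $M$ invariant under every bounded operator commuting with $T^{-1}$. By the commutant identity established above, this same $M$ is hyperinvariant for $T$, which completes the proof.

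I expect the only delicate point to be the domain bookkeeping in the commutant equivalence: the relation $ST\subseteq TS$ is an inclusion of possibly unbounded operators, so at each step one must verify the membership $Sx\in D(T)$ rather than manipulate $T$ as though it were everywhere defined. Once this equivalence is set up carefully, the compactness of $T^{-1}$ from Theorem \ref{characterization1} and Lomonosov's theorem supply the conclusion with no further computation.
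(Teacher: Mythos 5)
Your proof is correct and follows essentially the same route as the paper: since $T^{-1}\in\mathcal B(H)$, $T$ is injective with $T^{\dagger}=T^{-1}$, part (\ref{geninvcompact}) of Theorem \ref{characterization1} gives $T^{-1}\in\mathcal K(H)$, the commutant relation $ST\subseteq TS$ is transferred to $ST^{-1}=T^{-1}S$, and Lomonosov's theorem finishes the argument. If anything, your execution is tighter than the paper's own: the paper invokes Theorem \ref{Lomonosovthm} for each commuting $S$ separately (``$S$ has a non trivial invariant subspace''), which as literally written produces a subspace that may depend on $S$, whereas you apply Lomonosov once to $T^{-1}$ itself (compact, non-zero, and not a multiple of $I$ in infinite dimensions) to obtain a single subspace $M$ hyperinvariant for $T^{-1}$, and then use the commutant inclusion to conclude that this same $M$ is hyperinvariant for $T$ --- which is precisely the step needed to get a common invariant subspace rather than one per operator.
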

\begin{proof}
Let $S\in \mathcal B(H)$ be such that $ST\subseteq TS$. That is $STx=TSx$ for all $x\in D(T)$. Then it can be easily verified that $T^{-1}S=ST^{-1}$. But $T^{-1}\in \mathcal K(H)$ by (\ref{geninvcompact}) of Theorem  \ref{characterization1}. Now,  by Theorem \ref{Lomonosovthm},  $T^{-1}$  has a non trivial invariant subspace, say $M$. Then $M$ is invariant under $S$.  Thus the conclusion follows.
\end{proof}

Now, we can drop the  condition that the operator to be one-to-one  in Theorem \ref{structurethm} and prove the result.

\begin{thm}\label{mpiseparablerange}
Let $T\in \mathcal {AM}_{c}(H)$ be, positive but not bounded. Then
\begin{enumerate}
\item $T^{\dagger}$  is compact
\item $R(T)$ is separable.
\end{enumerate}
\end{thm}
\begin{proof}
 Since $N(T)$ reduces  $T$, we can write $T=\left(
                                              \begin{array}{cc}
                                                T_0 & 0 \\
                                                0 & T_1\\
                                              \end{array}
                                            \right)
 $,
 where $T_0=T|_{N(T)}$ and $T_1=T_{N(T)^{\bot}}$. Then  by Lemma \ref{mpiforblocks},  $T^{\dagger}=\left(
                                                                                                     \begin{array}{cc}
                                                                                                       T_0^{\dagger} & 0 \\
                                                                                                       0 & T_1^{-1} \\
                                                                                                     \end{array}
                                                                                                   \right)
 $. As $T_1\in \mathcal{AM}(N(T)^{\bot})$, by Theorem \ref{characterization1}, $T_1^{-1}$ is compact.
 Note that $T_0=0$ if $N(T)\neq {\{0}\}$ and $T=T_1$ if $N(T)={\{0}\}$. Hence $T^{\dagger}$ is compact.
 Also $R(T_1)$ is separable by  (\ref{ONBeigenvectors}) of Theorem \ref{structurethm}. Now the conclusion follows as $R(T)=R(T_1)$.
\end{proof}

Using Theorem \ref{mpiseparablerange}, we can prove a more general result.

\begin{thm}\label{MPIcompact}
Let $T\in \mathcal {AM}_{c}(H)$, but not bounded. Then
\begin{enumerate}
\item $T^{\dagger}$  is compact \label{compactMPI-gen}
\item  $N(T)^{\bot}$ and $R(T)$ are separable. \label{separablerange-gen}
\end{enumerate}
\end{thm}
\begin{proof}
Proof of (\ref{compactMPI-gen}): We have $T\in \mathcal {AM}_{c}(H)$ if and only if  $|T|\in \mathcal {AM}_{c}(H)$. Hence $|T|^{\dagger}\in \mathcal K(H)$ by Theorem \ref{mpiseparablerange}. But, by Proposition \ref{mprelations}, $|T|^{\dagger}=|(T^{\dagger})^*|$ and hence $(T^{\dagger})^*\in \mathcal{K}(H)$. This implies  that $T^{\dagger}\in \mathcal{K}(H)$.

Proof of (\ref{separablerange-gen}): Since $T^{\dagger}$ is compact, $R(T^{\dagger})=C(T)$ is separable. Hence $N(T)^{\bot}$ is separable.  Since $R(T)$ is closed,  $R(T^*)$ must be closed and since $N(T)^{\bot}=R(T^*)$ , $R(T^*)$ is separable.
But, $R(T^*)$ is separable if and only if $R(T)$ is separable by \cite[Problem 11.4.6, page 362]{aliprantisabramovich}.
\end{proof}
\begin{qn}
If $T\in \mathcal C(H)$ is densely defined and  $T^{\dagger}\in \mathcal K(H)$. Is it true that $T\in \mathcal {AM}_{c}(H)$.
\end{qn}

\begin{thm}\label{self-adjointAM}
Let $T$ be a densely defined and self-adjoint, one-to-one operator on an infinite dimensional  Hilbert space $H$ which is not bounded. Then the following are equivalent:
\begin{enumerate}
\item \label{AMoperator} $T\in \mathcal {AM}_{c}(H)$
\item \label{compactinverse}$T^{-1}\in \mathcal{K}(H)$
\item\label{ONBcriteria} there exists a real sequence $(\lambda_n)$ and an orthonormal basis ${\{v_n:n\in \mathbb N}\}$   of $H$ such that $\displaystyle \lim_{n\rightarrow \infty}|\lambda_n|=\infty$ and $Tv_n=\lambda_nv_n$ for each $n\in \mathbb N$
\item \label{discretespectcriteria} $T$ has purely discrete spectrum
\item\label{resolventcriteria} the resolvent $R_{\lambda}(T):=(T-\lambda I)^{-1}$ is compact for one, and hence for all $\lambda\in \rho(T)$
\item\label{embeddingmapcriteria} the embedding map $J_T: (D(T),\|\cdot \|_T)\rightarrow H$ is compact (here $\|x\|_T=\big(\|x\|^2+\|Tx\|^2\big)^{\frac{1}{2}},\; x\in D(T)$).
\end{enumerate}
\end{thm}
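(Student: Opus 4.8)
The plan is to establish the base equivalence $(\ref{AMoperator})\Leftrightarrow(\ref{compactinverse})$ first, since that is the only step that invokes the absolutely minimum attaining machinery developed above, and then to run the cycle $(\ref{compactinverse})\Rightarrow(\ref{ONBcriteria})\Rightarrow(\ref{discretespectcriteria})\Rightarrow(\ref{resolventcriteria})\Rightarrow(\ref{embeddingmapcriteria})\Rightarrow(\ref{compactinverse})$, which amounts to the spectral theory of a self-adjoint operator with compact inverse. Throughout I will use that, since $T=T^{*}$ is one-to-one, $\overline{R(T)}=N(T^{*})^{\bot}=N(T)^{\bot}=H$.

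For $(\ref{AMoperator})\Leftrightarrow(\ref{compactinverse})$ I would argue as follows. If $T\in\mathcal{AM}(H)$, then $R(T)$ is closed by Proposition \ref{closedrangeproperty}, so $R(T)=\overline{R(T)}=H$ and $T$ is a bijection of $D(T)$ onto $H$; hence $T^{-1}\in\mathcal B(H)$ and, since $T$ is injective with full closed range, $T^{\dagger}=T^{-1}$. Theorem \ref{characterization1} then yields $T^{-1}=T^{\dagger}\in\mathcal K(H)$. Conversely, if $T^{-1}\in\mathcal K(H)$, then $T^{-1}$ is bounded and everywhere defined, so $R(T)=H$ is closed, again $T^{\dagger}=T^{-1}\in\mathcal K(H)$, and Theorem \ref{characterization1} gives $T\in\mathcal{AM}(H)$. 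From here on only the self-adjoint operator with compact inverse is relevant.

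For $(\ref{compactinverse})\Rightarrow(\ref{ONBcriteria})\Rightarrow(\ref{discretespectcriteria})$: the operator $T^{-1}$ is compact, self-adjoint and injective, so the spectral theorem for compact self-adjoint operators supplies an orthonormal basis $\{v_n:n\in\mathbb N\}$ of eigenvectors with real nonzero eigenvalues $\mu_n\to 0$. Setting $\lambda_n:=\mu_n^{-1}$ gives $Tv_n=\lambda_n v_n$ with $|\lambda_n|\to\infty$, which is $(\ref{ONBcriteria})$. Because $|\lambda_n|\to\infty$, every $\lambda_n$ is an isolated eigenvalue of finite multiplicity and $\sigma(T)=\{\lambda_n:n\in\mathbb N\}$ has no finite accumulation point, so the essential spectrum is empty and the spectrum is purely discrete, giving $(\ref{discretespectcriteria})$.

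The analytic tail $(\ref{discretespectcriteria})\Rightarrow(\ref{resolventcriteria})\Rightarrow(\ref{embeddingmapcriteria})\Rightarrow(\ref{compactinverse})$ I would handle through the bounded transform $Q_T=(I+T^2)^{-1/2}$ (here $T^{*}T=T^{2}$). Writing $T=\sum_n\lambda_n\,v_n\otimes v_n$ from the discrete spectrum, for $\lambda\in\rho(T)$ we have $R_\lambda(T)=\sum_n(\lambda_n-\lambda)^{-1}v_n\otimes v_n$, a norm limit of finite rank operators since $|\lambda_n-\lambda|^{-1}\to 0$, hence compact; the resolvent identity $R_\lambda(T)-R_\mu(T)=(\lambda-\mu)R_\lambda(T)R_\mu(T)$ then propagates compactness to every $\lambda\in\rho(T)$, which is $(\ref{resolventcriteria})$. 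To reach $(\ref{embeddingmapcriteria})$, note that $\|x\|_T^2=\|x\|^2+\|Tx\|^2=\|(I+T^2)^{1/2}x\|^2$, so $(I+T^2)^{1/2}$ is an isometric isomorphism of $(D(T),\|\cdot\|_T)$ onto $H$; thus $J_T$ is compact if and only if $J_T(I+T^2)^{-1/2}=Q_T$ is compact, and $Q_T$ is compact if and only if the positive operator $(I+T^2)^{-1}$ is compact. Taking $\lambda=i\in\rho(T)$ and using $R_i(T)^{*}R_i(T)=(I+T^2)^{-1}$ links $(\ref{resolventcriteria})$ to this, giving $(\ref{embeddingmapcriteria})$. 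Finally, from $(I+T^2)^{-1}$ compact one diagonalises to recover an orthonormal eigenbasis of $T$ with $|\lambda_n|\to\infty$, and injectivity forces $0\in\rho(T)$ with $T^{-1}=\sum_n\lambda_n^{-1}v_n\otimes v_n$ compact, closing the loop at $(\ref{compactinverse})$. I expect the main obstacle to be the clean handling of $(\ref{embeddingmapcriteria})$: identifying the abstract graph-norm embedding $J_T$ with the concrete bounded transform $Q_T=(I+T^2)^{-1/2}$ and then transferring compactness correctly among $Q_T$, $(I+T^2)^{-1}$, the resolvent $R_i(T)$ and $T^{-1}$, while keeping careful track of the domains involved; the spectral-theoretic steps are routine once the spectral theorem for unbounded self-adjoint operators is in hand.
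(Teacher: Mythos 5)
Your proposal is correct. For the only equivalence that actually uses the machinery of this paper, (\ref{AMoperator})$\Leftrightarrow$(\ref{compactinverse}), you argue exactly as the paper does: closed range from Proposition \ref{closedrangeproperty}, injectivity to identify $T^{\dagger}=T^{-1}\in\mathcal B(H)$, then (\ref{geninvcompact}) of Theorem \ref{characterization1}; your version is in fact slightly more careful, since you use $\overline{R(T)}=N(T)^{\bot}=H$ to get genuine surjectivity of $T$, where the paper only invokes boundedness below. The real difference lies in the remaining equivalences (\ref{compactinverse})--(\ref{embeddingmapcriteria}): the paper proves none of them, citing \cite[Proposition 5.12, page 94]{schmudgen} wholesale, whereas you run the cycle (\ref{compactinverse})$\Rightarrow$(\ref{ONBcriteria})$\Rightarrow$(\ref{discretespectcriteria})$\Rightarrow$(\ref{resolventcriteria})$\Rightarrow$(\ref{embeddingmapcriteria})$\Rightarrow$(\ref{compactinverse}) by hand with standard spectral theory: diagonalization of the compact, injective, self-adjoint operator $T^{-1}$, the identity $R_i(T)^{*}R_i(T)=(I+T^{2})^{-1}$, and the isometry $(I+T^{2})^{1/2}\colon\big(D(T),\|\cdot\|_T\big)\rightarrow H$ identifying $J_T$ with $Q_T$. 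Your route buys a self-contained proof, and makes explicit the domain bookkeeping that the paper delegates to the reference, at the cost of length. Two routine points would need to be written out in a full version: in (\ref{ONBcriteria})$\Rightarrow$(\ref{discretespectcriteria}) you should justify that $\sigma(T)$ is exactly $\{\lambda_n:n\in\mathbb N\}$ (for instance, the diagonal operator determined by the pairs $(\lambda_n,v_n)$ is self-adjoint, and a self-adjoint operator admits no proper self-adjoint extension, so $T$ coincides with it); and in (\ref{embeddingmapcriteria})$\Rightarrow$(\ref{compactinverse}) compactness of $(I+T^{2})^{-1}$ produces eigenvectors of $T^{2}$, not yet of $T$, so you must observe that the finite-dimensional eigenspaces of $(I+T^{2})^{-1}$ are $T$-invariant and diagonalize the self-adjoint restriction of $T$ on each of them before concluding that $T^{-1}=\sum_{n}\lambda_n^{-1}\,v_n\otimes v_n$ is compact. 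Neither point is a gap in the plan, only detail to be supplied.
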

\begin{proof}
If $T\in \mathcal {AM}_{c}(H)$, then $R(T)$ is closed. As $T$ is one-to-one, $T$ must be bounded below and hence $T^{-1}\in \mathcal B(H)$. Now, by (\ref{geninvcompact}) of Theorem \ref{characterization1}, $T^{-1}\in \mathcal K(H)$. Again
by (\ref{geninvcompact}) of Theorem \ref{characterization1}, if $T^{-1}\in \mathcal K(H)$, then $T\in \mathcal {AM}_{c}(H)$. Thus (\ref{AMoperator}) and (\ref{compactinverse}) are equivalent. The equivalence of (\ref{compactinverse})-(\ref{embeddingmapcriteria}) follows by \cite[Proposition 5.12, page 94]{schmudgen}.
\end{proof}
Next, we give an example of $\mathcal{AM}$-operator.
\begin{eg}\label{sturmliouville}
Let  $p,p^{'},q,w$ be continuous real valued functions defined on
$[a,b]$ with $a<b$ and $w(t)>0$ for all $t\in[a,b]$. Consider the real Hilbert space
$$H:=\left\{u\colon \int_a^b|u|^2w<\infty
 \right\}$$ with the inner product
 \begin{equation*}
 \langle u,v\rangle:=\int_a^bu(x)v(x)w(x)dx.
 \end{equation*}
 Let $L$ be the \textbf{Sturm-Liouville operator} given by
 \begin{equation*}
 Lu:=\frac{1}{w}[-(pu^{'})^{'}+qu]
 \end{equation*}
 with
 \begin{equation*}
 D(L)=\left\{u\in H\colon u\in C^2[a,b], \begin{split}\beta_1u(a)+\gamma_1u^{'}(a)&=0,\\
 \beta_2u(b)+\gamma_2u^{'}(b)&=0,\\|\beta_1|+|\gamma_1|&>0,\\|\beta_2|+|\gamma_2|&>0\end{split} \right\}.
 \end{equation*}
 Since  $D(L)$ contains continuous functions defined on $[a,b]$ with compact support, $L$ is densely defined operator.
 Also $L$ is symmetric (See \cite[Chapter 7, section 5]{naysell}. Let us assume that $0\notin
 \sigma_p(L)$. In this case it easy to see that $L^{-1}$ is
 compact and self adjoint. Let $B:={\{\phi_1,\phi_2,\dots }\}$ is an  orthonormal basis for $H$ such that $Lv_n=\mu_n v_n$,
where $\mu_1,\mu_2,\dots$ is a sequence of real numbers which are
eigenvalues of $L$. In this case every $u\in H$ can be expressed
as $$u=\displaystyle \sum_{n=1}^\infty \langle u,v_n\rangle v_n.$$ If
$u\in D(L)$, then
$$\displaystyle \sum_{n=1}^\infty |\langle u,v_n\rangle|^2 \mu_n^2 <\infty$$
and $$Lu=\displaystyle \sum_{n=1}^\infty \mu_n\langle u,v_n\rangle v_n.$$

Note that $L^{-1}y=\displaystyle \sum_{n=1}^\infty \mu_n^{-1}\langle y ,v_n\rangle v_n$ for all $y\in H$. It  is clear that $|L^{-1}|$ is compact and by Proposition \ref{mprelations}, we have $|L|^{-1}=|L^{-1}|$. Hence by Theorem \ref{self-adjointAM}, $L\in \mathcal {AM}_{c}(H)$.

\end{eg}

We end up this section with the following question:

\begin{qn}
Does every bounded absolutely minimum attaining operator have  a non trivial (hyper)  invariant subspace?
\end{qn}

\textbf{Author contributions} All authors contributed equally and significantly in this paper. All authors read and approved the final manuscript. 

\begin{center}
\noindent \textbf{Compliance with ethical standards}
\end{center}
\textbf{Conflict of interest} The authors declare that they have no conflict of interest.

\noindent \textbf{Ethical approval}: This article does not contain any studies with human participants or animals performed by any of the authors.


\begin{thebibliography}{99}
\bibitem{akhglazman}
N. I. Akhiezer\ and\ I. M. Glazman, {\it Theory of linear operators in Hilbert space}, translated from the Russian and with a preface by Merlynd Nestell, reprint of the 1961 and 1963 translations, Dover Publications, Inc., New York, 1993. MR1255973
\bibitem{acostaetal}
M. D. Acosta, R. M. Aron\ and\ F. J. Garc\'{i}a-Pacheco, The approximate hyperplane series property and related properties, Banach J. Math. Anal. {\bf 11} (2017), no.~2, 295--310. MR3598746
\bibitem{rameshvenku}
D. Venku Naidu and G. Ramesh, On absolutely norm attaining operators, to appear in Proc. Indian Acad. Sci. Math. Sci. (https://www.ias.ac.in/public/Volumes/pmsc/forthcoming/PMSC-D-18-00008.pdf)
\bibitem{taylorlay}
A. E. Taylor\ and\ D. C. Lay, {\it Introduction to functional analysis}, second edition, John Wiley \& Sons, New York, 1980. MR0564653 (81b:46001)
\bibitem{goldberg}
S. Goldberg, {\it Unbounded linear operators: Theory and applications}, McGraw-Hill, New York, 1966. MR0200692 (34 \#580)

\bibitem{ben}
A. Ben-Israel\ and\ T. N. E. Greville, {\it Generalized inverses: theory and applications}, Wiley-Interscience, New York, 1974. MR0396607 (53 \#469)

\bibitem{rud}
W. Rudin, {\it Functional analysis}, second edition, International Series in Pure and Applied Mathematics, McGraw-Hill, New York, 1991. MR1157815 (92k:46001)

\bibitem{kato}
T. Kato, {\it Perturbation theory for linear operators}, second edition, Springer, Berlin, 1976. MR0407617 (53 \#11389)

\bibitem{naysell}
A. W. Naylor\ and\ G. R. Sell, {\it Linear operator theory in engineering and science}, second edition, Applied Mathematical Sciences, 40, Springer, New York, 1982. MR0672108 (83j:46001)

\bibitem{ggk1}
I. Gohberg, S. Goldberg\ and\ M. A. Kaashoek, {\it Basic classes of linear operators}, Birkh\"auser, Basel, 2003. MR2015498 (2005g:47001)

\bibitem{bernau1}
Bernau, S. J. Extreme eigenvectors of a normal operator. Proc. Amer. Math. Soc. 18 1967 127--128. MR0205066 (34 \#4901)

\bibitem{carvajalneves1}
Carvajal, Xavier; Neves, Wladimir. Operators that achieve the norm. Integral Equations Operator Theory 72 (2012), no. 2, 179--195. MR2872473 (2012k:47044)

\bibitem{carvajalneves2}
X. Carvajal\ and\ W. Neves, Operators that attain their minima, Bull. Braz. Math. Soc. (N.S.) {\bf 45} (2014), no.~2, 293--312. MR3249529

\bibitem{falcoetal}
J. Falc\'{o}\ et al., Spaceability in norm-attaining sets, Banach J. Math. Anal. {\bf 11} (2017), no.~1, 90--107. MR3571146
\bibitem{grs0}
J. Ganesh, G. Ramesh\ and\ D. Sukumar, Perturbation of minimum attaining operators, Adv. Oper. Theory {\bf 3} (2018), no.~3, 473--490. MR3795095
\bibitem{grs2}
J. Ganesh, G. Ramesh\ and\ D. Sukumar, A characterization of absolutely minimum attaining operators, J. Math. Anal. Appl. {\bf 468} (2018), no.~1, 567--583. MR3849004


\bibitem{rameshthesis}
G. Ramesh, Approximation methods for solving operator equations involving unbounded operators, Ph. D Thesis, IIT Madras. (2008).


\bibitem{enfloetal}
P. Enflo, J. Kover\ and\ L. Smithies, Denseness for norm attaining operator-valued functions, Linear Algebra Appl. {\bf 338} (2001), 139--144. MR1861118 (2002g:47148)

\bibitem{shkarin}
S. Shkarin, Norm attaining operators and pseudospectrum, Integral Equations Operator Theory {\bf 64} (2009), no.~1, 115--136. MR2501174 (2010c:47004)

\bibitem{birmannsolomyak}
M. \v S. Birman\ and\ M. Z. Solomjak, {\it Spectral theory of selfadjoint operators in Hilbert space} (Russian), Leningrad. Univ., Leningrad, 1980. MR0609148 (82k:47001)
\bibitem{rameshstructurethm}
G. Ramesh, Structure theorem for $\mathcal{AN}$-operators, J. Aust. Math. Soc. {\bf 96} (2014), no.~3, 386--395. MR3217722
\bibitem{radjavirosenthal}
H. Radjavi\ and\ P. Rosenthal, {\it Invariant subspaces}, second edition, Dover, Mineola, NY, 2003. MR2003221 (2004e:47010)

\bibitem{aliprantisabramovich}
Y. A. Abramovich\ and\ C. D. Aliprantis, {\it An invitation to operator theory}, Graduate Studies in Mathematics, 50, Amer. Math. Soc., Providence, RI, 2002. MR1921782 (2003h:47072)

\bibitem{shkgr3}
S. H. Kulkarni\ and\ G. Ramesh, Projection methods for computing Moore-Penrose inverses of unbounded operators, Indian J. Pure Appl. Math. {\bf 41} (2010), no.~5, 647--662. MR2735209 (2012k:47001)

\bibitem{shkgrminattaining}
S. H. Kulkarni\ and\ G. Ramesh, On the denseness of minimum attaining operators, Oper. Matrices {\bf 12} (2018), no.~3, 699--709. MR3853362

\bibitem{knr}
S. H. Kulkarni\ and\ M. T. Nair, A characterization of closed range operators, Indian J. Pure Appl. Math. {\bf 31} (2000), no.~4, 353--361. MR1760936 (2001d:47009)

\bibitem{schmudgen}
Schm{\"u}dgen, Konrad, Unbounded self-adjoint operators on {H}ilbert space, Graduate Texts in Mathematics,  265, Springer, Dordrecht, 2012, xx+432,  MR2953553



\bibitem{SP}
 S. K. Pandey\ and\ V. I. Paulsen, A spectral characterization of $\mathcal{AN}$ operators, J. Aust. Math. Soc. {\bf 102} (2017), no.~3, 369--391. MR3650963
\end{thebibliography}
\end{document}